\begin{document}

\title{Higher-order identities for balancing numbers}
\author{
Takao Komatsu\\
\small School of Mathematics and Statistics\\[-0.8ex]
\small Wuhan University\\[-0.8ex]
\small Wuhan 430072 China\\[-0.8ex]
\small \texttt{komatsu@whu.edu.cn}\\\\
Prasanta Kumar Ray\\
\small VSS University of Technology, Odisha\\[-0.8ex]
\small Burla-768018, India\\[-0.8ex]
\small \texttt{rayprasanta2008@gmail.com}
}

\date{
}

\maketitle

\def\fl#1{\left\lfloor#1\right\rfloor}
\def\cl#1{\left\lceil#1\right\rceil}
\def\stf#1#2{\left[#1\atop#2\right]}
\def\sts#1#2{\left\{#1\atop#2\right\}}

\newtheorem{theorem}{Theorem}
\newtheorem{Prop}{Proposition}
\newtheorem{Cor}{Corollary}
\newtheorem{Lem}{Lemma}

\begin{abstract}
Let $B_n$ be the $n$-th balancing number. In this paper, we give some explicit expressions of $\sum_{l=0}^{2 r-3}(-1)^l\binom{2 r-3}{l}\sum_{j_1+\cdots+j_r=n-2 l\atop j_1,\dots,j_r\ge 1}B_{j_1}\cdots B_{j_r}$ and $\sum_{j_1+\cdots+j_r=n\atop j_1,\dots,j_r\ge 1}B_{j_1}\cdots B_{j_r}$. 
We also consider the convolution identities with binomial coefficients:  
$$
\sum_{k_1+\cdots+k_r=n\atop k_1,\dots,k_r\ge 1}\binom{n}{k_1,\dots,k_r}B_{k_1}\cdots B_{k_r} 
$$ 
This type can be generalized, so that $B_n$ is a special case of the number $u_n$, where $u_n=a u_{n-1}+b u_{n-2}$ ($n\ge 2$) with $u_0=0$ and $u_1=1$.   
\end{abstract}

\section{Introduction}

Higher-order convolutions for various types of numbers (or polynomials) have been studied, with or without binomial (or multinomial) coefficients, including Bernoulli, Euler, Genocchi, Cauchy, Stirling, and Fibonacci numbers (\cite{AD1,AD2,AD3,Komatsu2015,Komatsu2016,KMP,KS2016}). One typical one is due to Euler, given by
$$
\sum_{k=0}^n\binom{n}{k}\mathcal B_k\mathcal B_{n-k}=-n\mathcal B_{n-1}-(n-1)\mathcal B_n\quad(n\ge 0)\,,
$$
where $\mathcal B_n$ are Bernoulli numbers, defined by
$$
\frac{t}{e^t-1}=\sum_{n=0}^\infty\mathcal B_n\frac{t^n}{n!}\quad(|t|<2\pi)\,.
$$

A positive integer $x$ is called {\it balancing number} if
\begin{equation}
1+2+\cdots+(x-1)=(x+1)+\cdots+(y-1)
\label{def:balancing}
\end{equation}
holds for some integer $y\ge x+2$. The problem of determining all balancing numbers leads to a Pell equation, whose solutions in $x$ can be described by the recurrence $B_n=6B_{n-1}-B_{n-2}$ ($n\ge 2$) with $B_0=0$ and $B_1=1$ (see \cite{behera-panda,Finkelstein}).
One of the most general extensions of balancing numbers is when (\ref{def:balancing}) is being replaced by
\begin{equation}
1^k+2^k+\cdots+(x-1)^k=(x+1)^l+\cdots+(y-1)^l\,,
\label{kpower-balancing}
\end{equation}
where the exponents $k$ and $l$ are given positive integers. In the work of Liptai et al. \cite{LLPS} effective and non-effective finiteness theorems on (\ref{kpower-balancing}) are proved.  In \cite{KS2014} a balancing problem of ordinary binomial coefficients is studied.  Some more results can be seen in \cite{panda2009,PR,ray2012}.

The generating function $f(x)$ of balancing numbers $B_n$ is given by
$$
f(x):=\frac{x}{1-6 x+x^2}=\sum_{n=0}^\infty B_n x^n\,.
$$
Then $f(x)$ satisfies the relation:
\begin{equation}
f(x)^2=\frac{x^2}{1-x^2}f'(x)
\label{f2-fd1}
\end{equation}
or
\begin{equation}
(1-x^2)f(x)^2=x^2 f'(x)\,.
\label{f2-fd1-2}
\end{equation}
The left-hand side of (\ref{f2-fd1-2}) is
\begin{align*}
&(1-x^2)\left(\sum_{u=0}^\infty B_u x^u\right)\left(\sum_{v=0}^\infty B_v x^v\right)\\
&=(1-x^2)\sum_{n=0}^\infty\sum_{j=0}^n B_j B_{n-j}x^n\\
&=\sum_{n=0}^\infty\sum_{j=0}^n B_j B_{n-j}x^n-\sum_{n=2}^\infty\sum_{j=0}^{n-2}B_j B_{n-j-2}x^n\,.
\end{align*}
The right-hand side of (\ref{f2-fd1-2}) is
$$
x^2\sum_{n=1}^\infty n B_n x^{n-1}=\sum_{n=0}^\infty(n-1)B_{n-1}x^n\,.
$$
Comparing the coefficients of both sides, we get
\begin{align*}
(n-1)B_{n-1}&=\sum_{j=0}^n B_j B_{n-j}-\sum_{j=0}^{n-2}B_j B_{n-j-2}\\
&=\sum_{j=1}^{n-1}(B_j B_{n-j}-B_{j-1}B_{n-j-1})\,.
\end{align*}
Here, notice that $B_0=0$.
By changing $n$ by $n+1$, we get the following identity.

\begin{theorem}
For $n\ge 1$, we have
$$
n B_n=\sum_{j=1}^n(B_j B_{n-j+1}-B_{j-1}B_{n-j})\,.
$$
\label{th10}
\end{theorem}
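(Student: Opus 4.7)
The plan is to derive this identity directly from the generating-function relation (\ref{f2-fd1-2}), namely $(1-x^2)f(x)^2 = x^2 f'(x)$. First I would verify that relation from the closed form $f(x)=x/(1-6x+x^2)$: a short computation gives $f'(x) = (1-x^2)/(1-6x+x^2)^2$, and hence $x^2 f'(x) = (1-x^2) f(x)^2$, confirming (\ref{f2-fd1-2}). Everything then reduces to extracting coefficients on both sides.

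Next I would expand both sides as formal power series. On the left, write $f(x)^2 = \sum_{n\ge 0}\bigl(\sum_{j=0}^n B_j B_{n-j}\bigr) x^n$ and multiply by $1-x^2$ to produce the difference of two convolutions, one indexed up to $n$ and one shifted to begin with $B_j B_{n-j-2}$. Using $B_0=0$ to kill the $j=0$ and $j=n$ terms in the first sum and reindexing, the two sums collapse into a single telescoping-style expression $\sum_{j=1}^{n-1}(B_j B_{n-j}-B_{j-1}B_{n-j-1})$. On the right, $x^2 f'(x) = \sum_{n\ge 0}(n-1)B_{n-1} x^n$. Equating the coefficients of $x^n$ and then shifting $n\mapsto n+1$ yields the stated identity.

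The argument is conceptually routine; the main obstacle is simply the careful index bookkeeping when merging the two convolutions into a single telescoping sum, and making sure that the boundary contributions really do vanish by virtue of $B_0=0$. Once that collapse is executed correctly, no induction, no combinatorial identities, and no further properties of the balancing recurrence $B_n=6B_{n-1}-B_{n-2}$ are needed beyond what is already packaged into the generating function $f(x)$.
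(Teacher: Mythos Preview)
Your proposal is correct and follows essentially the same route as the paper: expand both sides of $(1-x^2)f(x)^2=x^2 f'(x)$ as power series, use $B_0=0$ to rewrite the difference of convolutions as $\sum_{j=1}^{n-1}(B_jB_{n-j}-B_{j-1}B_{n-j-1})$, compare with $(n-1)B_{n-1}$, and shift $n\mapsto n+1$. The only addition is your explicit verification of (\ref{f2-fd1-2}) via $f'(x)=(1-x^2)/(1-6x+x^2)^2$, which the paper simply asserts.
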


Differentiating both sides of (\ref{f2-fd1}) by $x$ and dividing them by $2$, we obtain
\begin{equation}
f(x)f'(x)=\frac{x}{(1-x^2)^2}f'(x)+\frac{x^2}{2(1-x^2)}f''(x)\,.
\label{ffd-fd1fd2}
\end{equation}
By (\ref{f2-fd1}) and (\ref{ffd-fd1fd2}), we get
\begin{align}
f(x)^3&=\frac{x^2}{1-x^2}f(x)f'(x)\notag\\
&=\frac{x^3}{(1-x^2)^3}f'(x)+\frac{x^4}{2(1-x^2)^2}f''(x)
\label{f3}
\end{align}
or
\begin{equation}
(1-x^2)^3 f(x)^3=x^3 f'(x)+\frac{1}{2}x^4(1-x^2)f''(x)\,.
\label{f3-2}
\end{equation}
The left-hand side of (\ref{f3-2}) is equal to equal to
\begin{align*}
&(1-3 x^2+3 x^4-x^6)\sum_{n=0}^\infty\sum_{j_1+j_2+J_3=n\atop j_1, j_2, j_3\ge 0}B_{j_1}B_{j_2}B_{j_3}x^n\\
&=\sum_{l=0}^3\sum_{n=2 l}^\infty(-1)^l\binom{3}{l}\sum_{j_1+j_2+J_3=n-2 l\atop j_1, j_2, j_3\ge 1}B_{j_1}B_{j_2}B_{j_3}x^n\,.
\end{align*}
The right-hand side of (\ref{f3-2}) is
\begin{align*}
&x^3\sum_{n=1}^\infty n B_n x^{n-1}+\frac{x^4}{2}\sum_{n=2}^\infty n(n-1)B_n x^{n-2}-\frac{x^6}{2}\sum_{n=2}^\infty n(n-1)B_n x^{n-2}\\
&=\sum_{n=2}^\infty\frac{(n-1)(n-2)}{2}B_{n-2}x^n-\sum_{n=4}^\infty\frac{(n-4)(n-5)}{2}B_{n-4}x^n\,.
\end{align*}
Comparing the coefficients of both sides, we get the following result.

\begin{theorem}
For $n\ge 4$, we have
$$
\sum_{l=0}^3(-1)^l\binom{3}{l}\sum_{j_1+j_2+J_3=n-2 l\atop j_1, j_2, j_3\ge 1}B_{j_1}B_{j_2}B_{j_3}
=\binom{n-1}{2}B_{n-2}-\binom{n-4}{2}B_{n-4}\,.
$$
\label{th30}
\end{theorem}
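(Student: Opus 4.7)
The plan is to extract the identity by comparing the $x^n$-coefficients in the functional equation (\ref{f3-2}), mirroring exactly the argument that the preamble used to produce Theorem~\ref{th10} from (\ref{f2-fd1-2}). Since the relation
$$(1-x^2)^3 f(x)^3 = x^3 f'(x)+\tfrac{1}{2}x^4(1-x^2)f''(x)$$
has already been established from (\ref{f2-fd1}) and (\ref{ffd-fd1fd2}), the remaining work is purely in reading off the coefficient on each side.

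For the left-hand side, I would form the triple Cauchy product $f(x)^3=\sum_n\bigl(\sum_{j_1+j_2+j_3=n}B_{j_1}B_{j_2}B_{j_3}\bigr)x^n$ and multiply by the binomial expansion $(1-x^2)^3=\sum_{l=0}^3(-1)^l\binom{3}{l}x^{2l}$. Because $B_0=0$, every term in the inner convolution with some $j_i=0$ vanishes, so that convolution may be written with the restriction $j_1,j_2,j_3\ge 1$ without changing its value. The resulting coefficient of $x^n$ is then precisely the left-hand side of the theorem.

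For the right-hand side, I would differentiate $f(x)=\sum B_n x^n$ once and twice and then carry the prefactors $x^3$, $x^4$, $-x^6$ inside the sums via index shifts. This produces $x^3 f'(x)=\sum_{n\ge 2}(n-2)B_{n-2}x^n$, $\tfrac12 x^4 f''(x)=\sum_{n\ge 2}\tfrac12(n-2)(n-3)B_{n-2}x^n$, and $-\tfrac12 x^6 f''(x)=-\sum_{n\ge 4}\tfrac12(n-4)(n-5)B_{n-4}x^n$. Adding the first two and using $(n-2)+\tfrac12(n-2)(n-3)=\tfrac12(n-2)(n-1)=\binom{n-1}{2}$ yields a contribution of $\binom{n-1}{2}B_{n-2}$, while the third contributes $-\binom{n-4}{2}B_{n-4}$. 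Equating coefficients of $x^n$ then gives the stated identity.

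I do not expect any serious obstacle: the generating function $f(x)$ and all derivatives are analytic at the origin, so term-by-term comparison is legitimate, and the convention $B_0=0$ handles the boundary of the convolution cleanly. The only item requiring care is the bookkeeping of the index shifts, which is also what determines the hypothesis $n\ge 4$: the term $-\tfrac12 x^6 f''(x)$ first contributes at $x^4$, so for smaller $n$ the right-hand side would need to be truncated.
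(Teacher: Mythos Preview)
Your proposal is correct and follows essentially the same route as the paper: expand both sides of the relation $(1-x^2)^3 f(x)^3=x^3 f'(x)+\tfrac12 x^4(1-x^2)f''(x)$ as power series, use $B_0=0$ to restrict the triple convolution to $j_1,j_2,j_3\ge 1$, and combine the $x^3 f'$ and $\tfrac12 x^4 f''$ contributions into $\binom{n-1}{2}B_{n-2}$ before equating coefficients. One minor slip: the series for $-\tfrac12 x^6 f''(x)$ actually starts at $x^6$, not $x^4$; the hypothesis $n\ge 4$ is there so that $B_{n-4}$ has a nonnegative index (and $\binom{n-4}{2}B_{n-4}=0$ for $n=4,5$ anyway).
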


In this paper, we give some explicit expression of a more general case $\sum_{l=0}^{2 r-3}(-1)^l\binom{2 r-3}{l}\sum_{j_1+\cdots+j_r=n-2 l\atop j_1,\dots,j_r\ge 1}B_{j_1}\cdots B_{j_r}$ and  
$\sum_{j_1+\cdots+j_r=n\atop j_1,\dots,j_r\ge 1}B_{j_1}\cdots B_{j_r}$. 
We also consider a different type for more general numbers: 
$$
\sum_{k_1+\cdots+k_r=n\atop k_1,\dots,k_r\ge 1}\binom{n}{k_1,\dots,k_r}u_{k_1}\cdots u_{k_r}\,,  
$$ 
where $u_n=a u_{n-1}+b u_{n-2}$ ($n\ge 2$) with $u_0=0$ and $u_1=1$. When $a=6$ and $b=-1$, this is reduced to the convolution identity for balancing numbers. 
When $a=b=1$, this is reduced to the convolution identity for Fibonacci numbers. The corresponding identities for Luca-balancing numbers are also given.

\section{Main results}

First, as a general case of (\ref{f2-fd1}) and (\ref{f3}), we can have the following.

\begin{Lem}
For $r\ge 2$, we have
\begin{equation}
f(x)^r=\frac{x^{2 r-2}f^{(r-1)}(x)}{(r-1)!(1-x^2)^{r-1}}+\sum_{k=1}^{r-2}\frac{\sum_{j=0}^{k-1}\binom{k}{j}\binom{r-2}{k-j-1}x^{2 r-k+2 j-2}}{k(r-k-2)!(1-x^2)^{r+k-1}}f^{(r-k-1)}(x)\,.
\label{lem-iden}
\end{equation}
\label{lem40}
\end{Lem}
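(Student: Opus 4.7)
My plan is to prove (\ref{lem-iden}) by induction on $r$. The base case $r=2$ is exactly identity (\ref{f2-fd1}): the $k$-sum in (\ref{lem-iden}) is empty, and the only surviving summand reads $x^{2}f'(x)/(1-x^{2})$.

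The engine of the induction is the simple recursion
$$
f(x)^{r+1} \;=\; \frac{x^{2}}{r(1-x^{2})}\,\frac{d}{dx}\bigl[f(x)^{r}\bigr],
$$
which follows immediately from (\ref{f2-fd1}) by solving for $f'(x)=(1-x^{2})f(x)^{2}/x^{2}$ and substituting into $[f(x)^{r}]'=rf(x)^{r-1}f'(x)$. Iterating once from $r=2$ recovers (\ref{f3}) as a consistency check.

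For the inductive step, assume (\ref{lem-iden}) at level $r$ and write it as $f(x)^{r}=\sum_{k=0}^{r-2}c_{r,k}(x)f^{(r-k-1)}(x)$, extending by $c_{r,-1}(x)=c_{r,r-1}(x)=0$. Differentiating by the product rule, the coefficient of $f^{(r-k)}(x)$ in $[f(x)^{r}]'$ is $c_{r,k}(x)+c_{r,k-1}'(x)$. After multiplying by $x^{2}/(r(1-x^{2}))$, the inductive step reduces to verifying the rational-function identity
$$
c_{r+1,k}(x) \;=\; \frac{x^{2}}{r(1-x^{2})}\bigl[c_{r,k}(x)+c_{r,k-1}'(x)\bigr]\qquad(0\le k\le r-1).
$$
The $k=0$ case is immediate: $\frac{x^{2}}{r(1-x^{2})}\cdot\frac{x^{2r-2}}{(r-1)!(1-x^{2})^{r-1}}=\frac{x^{2r}}{r!(1-x^{2})^{r}}$, matching the leading term of $c_{r+1,0}(x)$.

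For $k\ge 1$, I would compute $c_{r,k-1}'(x)$ using $\frac{d}{dx}\frac{x^{a}}{(1-x^{2})^{b}}=\frac{ax^{a-1}}{(1-x^{2})^{b}}+\frac{2bx^{a+1}}{(1-x^{2})^{b+1}}$, then bring everything over the common denominator $(1-x^{2})^{r+k}$ by multiplying the $(1-x^{2})^{r+k-1}$ pieces by $(1-x^{2})$. Matching the resulting numerator against that of $c_{r+1,k}(x)$ amounts to recombining the weights $\binom{k-1}{j'}\binom{r-2}{k-j'-2}$ and $\binom{k}{j}\binom{r-2}{k-j-1}$ into the target weights $\binom{k}{j}\binom{r-1}{k-j-1}$, invoking Pascal's identity $\binom{r-2}{k-j-2}+\binom{r-2}{k-j-1}=\binom{r-1}{k-j-1}$ at the appropriate places. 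I expect the main obstacle to be this bookkeeping: the two pieces of each derivative, together with the $(1-x^{2})$ factor used to equalize denominators, produce several families of monomials $x^{2r-k+2j}$ whose coefficients must be reorganized by shifts of the summation index so that Pascal's identity applies. No deeper combinatorial input beyond Pascal should be needed, but tracking the index shifts precisely is the technical heart of the argument.
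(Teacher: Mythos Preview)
Your approach is essentially the same as the paper's: induction on $r$, differentiate the level-$r$ identity, then use (\ref{f2-fd1}) in the form $f(x)^{r+1}=\dfrac{x^{2}}{r(1-x^{2})}\bigl(f(x)^{r}\bigr)'$ to pass to level $r+1$ and match coefficients of each $f^{(r-k)}$. The only refinement is that the coefficient match requires a bit more than raw Pascal --- the paper records the needed three-term identity
\[
\frac{r-k-1}{k}\binom{k}{j}\binom{r-2}{k-j-1}+\frac{2r-k-1+2j}{k-1}\binom{k-1}{j}\binom{r-2}{k-j-2}+\frac{3k-2j-1}{k-1}\binom{k-1}{j-1}\binom{r-2}{k-j-1}=\frac{r}{k}\binom{k}{j}\binom{r-1}{k-j-1}\quad(k\ge 2),
\]
which is elementary but not a single application of Pascal's rule.
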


\begin{proof}
The proof is done by induction.
It is trivial to see that the identity holds for $r=2$.
Suppose that the identity holds for some $r$.
Differentiating both sides by $x$, we obtain
\begin{align*}
&r f(x)^{r-1}f'(x)\\
&=\frac{x^{2r-2}f^{(r)}(x)}{(r-1)!(1-x^2)^{r-1}}+\frac{(2r-2)x^{2r-3}f^{(r-1)}(x)}{(r-1)!(1-x^2)^{r}}\\
&\quad +\sum_{k=1}^{r-2}\frac{\sum_{j=0}^{k-1}\binom{k}{j}\binom{r-2}{k-j-1}x^{2r-k-2+2j}}{k(r-k-2)!(1-x^2)^{r+k-1}}f^{(r-k)}(x)\\
&\quad +\sum_{k=1}^{r-2}\frac{\sum_{j=0}^{k-1}(2r-k-2+2j)\binom{k}{j}\binom{r-2}{k-j-1}x^{2r-k-3+2j}}{k(r-k-2)!(1-x^2)^{r+k}}f^{(r-k-1)}(x)\\
&\quad +\sum_{k=1}^{r-2}\frac{\sum_{j=0}^{k-1}(3k-2j)\binom{k}{j}\binom{r-2}{k-j-1}x^{2r-k-1+2j}}{k(r-k-2)!(1-x^2)^{r+k}}f^{(r-k-1)}(x)\\
&=\frac{x^{2r-2}f^{(r)}(x)}{(r-1)!(1-x^2)^{r-1}}+\frac{2 x^{2r-3}f^{(r-1)}(x)}{(r-2)!(1-x^2)^{r}}\\
&\quad +\sum_{k=1}^{r-2}\frac{\sum_{j=0}^{k-1}\binom{k}{j}\binom{r-2}{k-j-1}x^{2r-k-2+2j}}{k(r-k-2)!(1-x^2)^{r+k-1}}f^{(r-k)}(x)\\
&\quad +\sum_{k=2}^{r-1}\frac{\sum_{j=0}^{k-2}(2r-k-1+2j)\binom{k-1}{j}\binom{r-2}{k-j-2}x^{2r-k-2+2j}}{(k-1)(r-k-1)!(1-x^2)^{r+k-1}}f^{(r-k)}(x)\\
&\quad +\sum_{k=2}^{r-1}\frac{\sum_{j=1}^{k-1}(3k-2j-1)\binom{k-1}{j-1}\binom{r-2}{k-j-1}x^{2r-k-2+2j}}{(k-1)(r-k-1)!(1-x^2)^{r+k-1}}f^{(r-k)}(x)\\
&=\frac{x^{2r-2}f^{(r)}(x)}{(r-1)!(1-x^2)^{r-1}}
+r\sum_{k=1}^{r-1}\frac{\sum_{j=0}^{k-1}\binom{k}{j}\binom{r-1}{k-j-1}x^{2r-k-2+2j}}{k(r-k-1)!(1-x^2)^{r+k-1}}f^{(r-k)}(x)\,.
\end{align*}
Here, we used the relations
$$
\frac{2}{(r-2)!}+\frac{1}{(r-3)!}=\frac{r}{(r-2)!}\quad(k=1)
$$
and
\begin{align*}
&\frac{r-k-1}{k}\binom{k}{j}\binom{r-2}{k-j-1}+\frac{2r-k-1+2j}{k-1}\binom{k-1}{j}\binom{r-2}{k-j-2}\\
&\qquad\qquad +\frac{3k-2j-1}{k-1}\binom{k-1}{j-1}\binom{r-2}{k-j-1}\\
&=\frac{r}{k}\binom{k}{j}\binom{r-1}{k-j-1}\quad (k\ge 2)\,.
\end{align*}
Together with (\ref{f2-fd1}), we get
\begin{align*}
f(x)^{r+1}&=\frac{x^2}{1-x^2}f(x)^{r-1}f'(x)\\
&=\frac{x^2}{1-x^2}\left(\frac{x^{2r-2}f^{(r)}(x)}{r!(1-x^2)^{r-1}}
+\sum_{k=1}^{r-1}\frac{\sum_{j=0}^{k-1}\binom{k}{j}\binom{r-1}{k-j-1}x^{2r-k-2+2j}}{k(r-k-1)!(1-x^2)^{r+k-1}}f^{(r-k)}(x)\right)\\
&=\frac{x^{2r}f^{(r)}(x)}{r!(1-x^2)^{r}}
+\sum_{k=1}^{r-1}\frac{\sum_{j=0}^{k-1}\binom{k}{j}\binom{r-1}{k-j-1}x^{2r-k+2j}}{k(r-k-1)!(1-x^2)^{r+k}}f^{(r-k)}(x)\,.
\end{align*}
\end{proof}

In general, we can state the following.

\begin{theorem}
Let $r\ge 2$.  Then for $n\ge 3 r-5$, we have
\begin{multline*}
\sum_{l=0}^{2r-3}(-1)^l\binom{2r-3}{l}\sum_{j_1+\cdots+j_r=n-2l\atop j_1,\dots,j_r\ge 1}B_{j_1}\cdots B_{j_r}\\
=\sum_{k=1}^{r-1}(-1)^{k-1}\frac{n-2k-r+3}{r-1}\binom{n-2k+1}{r-k-1}\binom{n-k-2r+3}{k-1}B_{n-2k-r+3}\,.
\end{multline*}
\label{th40}
\end{theorem}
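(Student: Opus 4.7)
My plan is to mirror the derivations of Theorems~\ref{th10} and~\ref{th30}, using Lemma~\ref{lem40} in place of the ad hoc identities~\eqref{f2-fd1-2} and~\eqref{f3-2}. I would multiply both sides of Lemma~\ref{lem40} by $(1-x^2)^{2r-3}$, which is the largest power of $1-x^2$ appearing in any denominator on the right. This clears every $(1-x^2)$-denominator, turning both sides into honest formal power series in $x$ from which coefficients can be extracted directly.

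On the left, $(1-x^2)^{2r-3}f(x)^r$ expands by the binomial theorem, and writing $f(x)^r=\sum_{n\ge 0}\bigl(\sum_{j_1+\cdots+j_r=n,\,j_i\ge 0}B_{j_1}\cdots B_{j_r}\bigr)x^n$ and using $B_0=0$ to restrict to $j_i\ge 1$, the coefficient of $x^n$ becomes exactly the left-hand side of the theorem. On the right, every summand after clearing denominators has the form $C\cdot x^A(1-x^2)^E f^{(s)}(x)$ for explicit constants $C$, $A$, $E$ and an order $s\in\{1,\ldots,r-1\}$. Substituting $f^{(s)}(x)=\sum_{p\ge 0}\frac{(p+s)!}{p!}B_{p+s}x^p$ and extracting the $x^n$-coefficient, each summand becomes a sum (over the power $2i$ chosen from $(1-x^2)^E$) of scalar multiples of balancing numbers $B_{n-A-2i+s}$. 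A short bookkeeping argument shows that every balancing number appearing has the form $B_{n-2K-r+3}$ for a unique $K\in\{1,\ldots,r-1\}$, matching the indexing of the target right-hand side: the leading term of Lemma~\ref{lem40} contributes to all such $K$ via $K=i+1$, while the $k$-th summand contributes only when $K=i+j+1\le r-2$.

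The main obstacle is the combinatorial simplification in the final step: upon fixing $K$ and collecting every contribution to $B_{n-2K-r+3}$, one must show that the resulting double sum over $k$ and $j$ (with $i=K-1-j$) of three binomial coefficients and a falling factorial collapses to the single product $(-1)^{K-1}\frac{n-2K-r+3}{r-1}\binom{n-2K+1}{r-K-1}\binom{n-K-2r+3}{K-1}$. I expect this to follow from a Vandermonde-type convolution on the $j$-sum, followed by standard binomial manipulations such as $\binom{n}{k}\binom{k}{j}=\binom{n}{j}\binom{n-j}{k-j}$ and the absorption identity $\binom{n}{k}=\frac{n}{k}\binom{n-1}{k-1}$ to evaluate the remaining $k$-sum. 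The hypothesis $n\ge 3r-5$ ensures that the smallest balancing index $n-3r+5$ (attained at $K=r-1$) is nonnegative, so every term is well-defined; the cases $r=2$ and $r=3$ should recover Theorems~\ref{th10} and~\ref{th30} as sanity checks.
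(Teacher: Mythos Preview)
Your proposal is essentially the paper's own proof: the paper multiplies Lemma~\ref{lem40} by $(1-x^2)^{2r-3}$ to obtain~\eqref{eq345}, expands both sides exactly as you describe, and then reduces everything to the combinatorial identity~\eqref{mainsub}, which it asserts without a detailed verification. Your ``main obstacle'' is precisely the identity~\eqref{mainsub}, and the paper treats it the same way you do---as a binomial identity to be checked separately---so your outline matches the paper's argument in both structure and level of detail.
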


\begin{proof}
By Lemma \ref{lem40} we get
\begin{multline}
(1-x^2)^{2r-3}f(x)^r
=(1-x^2)^{r-2}\frac{x^{2r-2}f^{(r-1)}(x)}{(r-1)!}\\
+\sum_{k=1}^{r-2}(1-x^2)^{r-k-2}\frac{\sum_{j=0}^{k-1}\binom{k}{j}\binom{r-2}{k-j-1}x^{2r-k-2+2j}}{k(r-k-2)!}f^{(r-k-1)}(x)\,.
\label{eq345}
\end{multline}
Since $B_0=0$, the left-hand side of (\ref{eq345}) is equal to
\begin{align*}
&(1-x^2)^{2r-3}\sum_{n=0}^\infty\sum_{j_1+\cdots+j_r=n\atop j_1,\dots,j_r\ge 0}B_{j_1}\cdots B_{j_r}x^n\\
&=\sum_{l=0}^{2r-3}\sum_{n=2 l}^\infty(-1)^l\binom{2r-3}{l}\sum_{j_1+\cdots+j_r=n-2l\atop j_1,\dots,j_r\ge 1}B_{j_1}\cdots B_{j_r}x^n\,.
\end{align*}
On the other hand,
\begin{align*}
&(1-x^2)^{r-2}\frac{x^{2r-2}f^{(r-1)}(x)}{(r-1)!}\\
&=\sum_{i=0}^{r-2}\binom{r-2}{i}x^{2i}\frac{x^{2r-2}}{(r-1)!}\sum_{n=r-1}^\infty\frac{n!}{(n-r+1)!}B_n x^{n-r+1}\\
&=\frac{1}{(r-1)!}\sum_{i=0}^{r-2}\binom{r-2}{i}\sum_{n=2r+2i-2}^\infty\frac{(n-r-2i+1)!}{(n-2r-2i+2)!}B_{n-r-2i+1}x^n\,.
\end{align*}
For $i=r-2$, we have
\begin{align*}
&\frac{(-1)^{r-2}}{(r-1)!}\sum_{n=4r-6}^\infty\frac{(n-3r+5)!}{(n-4r+6)!}B_{n-3r+5}x^n\\
&=(-1)^{r-2}\sum_{n=3r-5}^\infty\frac{n-3r+5}{r-1}\binom{n-3r+4}{r-2}B_{n-3r+5}x^n\,,
\end{align*}
which yields the term for $k=r-1$ on the right-hand side of the identity in Theorem \ref{th40}.  Notice that
$$
\binom{\gamma'}{\gamma}=0\quad(\gamma'<\gamma)\,.
$$
The second term of the right-hand side of (\ref{eq345}) is
\begin{align*}
&\sum_{k=1}^{r-2}\sum_{i=0}^{r-k-2}(-1)^i\binom{r-k-2}{i}x^{2i}\frac{1}{k(r-k-2)!}\sum_{j=0}^{k-1}
\binom{k}{j}\binom{r-2}{k-j-1}x^{2r-k-2+2j}\\
&\qquad\times\sum_{n=r-k-1}^\infty\frac{n!}{(n-r+k+1)!}B_n x^{n-r+k+1}\\
&=\sum_{i=0}^{r-3}\sum_{j=0}^{r-i-3}\sum_{k=j}^{r-i-3}\frac{(-1)^i}{(k+1)(r-k-3)!(n-2r+k-2i-2j+3)!}\binom{r-k-3}{i}\\
&\quad\times\binom{k+1}{j}\binom{r-2}{k-j}\sum_{n=2r+2i+2j-k-3}(n-r-2i-2j+1)!B_{n-r-2i-2j+1}x^n\\
&=\sum_{i=0}^{r-3}\sum_{\kappa=i+1}^{r-2}\sum_{k=\kappa-i-1}^{r-i-3}\frac{(-1)^i}{(k+1)(r-k-3)!(n-2r+k-2\kappa+5)!}\binom{r-k-3}{i}\\
&\quad\times\binom{k+1}{\kappa-i-1}\binom{r-2}{k-\kappa+i+1}\sum_{n=2r+2\kappa-k-5}(n-r-2\kappa+3)!B_{n-r-2\kappa+3}x^n\,.
\end{align*}
Together with the first term of the right-hand side of (\ref{eq345}) we can prove that
\begin{align}
&\frac{(-1)^{k-1}}{(r-1)!}\binom{r-2}{k-1}\frac{(n-r-2k+3)!}{(n-2r-2k+4)!}\notag\\
&\quad +\sum_{i=0}^{k-1}\sum_{l=k-i-1}^{r-i-3}\frac{(-1)^{k-1}}{(l+1)(r-l-3)!(n-2r+l-2k+5)!}\notag\\
&\qquad\times\binom{r-l-3}{i}\binom{l+1}{k-i-1}\binom{r-2}{l-k+i+1}(n-r-2k+3)!\notag\\
&=(-1)^{k-1}\frac{n-2k-r+3}{r-1}\binom{n-2k+1}{r-k-1}\binom{n-k-2r+3}{k-1}\,.
\label{mainsub}
\end{align}
Then the proof is done.
\end{proof}

\section{Examples}

When $r=2$ and $r=3$,  Theorem \ref{th40} is reduced to Theorem \ref{th10} and Theorem \ref{th30}, respectively.
When $r=4,5,6$ in Theorem \ref{th40}, we get the following Corollaries as examples.

\begin{Cor}
For $n\ge 7$, we have
\begin{multline*}
\sum_{l=0}^5(-1)^l\binom{5}{l}\sum_{j_1+j_2+j_3+j_4=n-2l\atop j_1,j_2,j_3,j_4\ge 1}B_{j_1}B_{j_2}B_{j_3}B_{j_4}\\
=\binom{n-1}{3}B_{n-3}-\frac{(n-3)(n-5)(n-7)}{3}B_{n-5}+\binom{n-7}{3}B_{n-7}\,.\end{multline*}
\label{th480}
\end{Cor}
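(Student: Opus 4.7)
The plan is to obtain Corollary \ref{th480} as a direct specialization of Theorem \ref{th40} at $r=4$, so no new generating-function manipulation is needed; the task is only to verify that the right-hand side of Theorem \ref{th40} collapses to the three displayed terms. First I would check the hypothesis: Theorem \ref{th40} requires $n\ge 3r-5$, which gives exactly $n\ge 7$ when $r=4$, matching the hypothesis of the corollary. Since $2r-3=5$, the left-hand sides agree term-by-term.

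Next I would evaluate the sum
\[
\sum_{k=1}^{r-1}(-1)^{k-1}\frac{n-2k-r+3}{r-1}\binom{n-2k+1}{r-k-1}\binom{n-k-2r+3}{k-1}B_{n-2k-r+3}
\]
from Theorem \ref{th40} at $r=4$, so $k$ ranges over $\{1,2,3\}$ and the exponent on $B$ becomes $n-2k-1$. For $k=1$ I would simplify
\[
\frac{n-3}{3}\binom{n-1}{2}\binom{n-6}{0}B_{n-3}=\frac{(n-1)(n-2)(n-3)}{6}B_{n-3}=\binom{n-1}{3}B_{n-3},
\]
which is the first displayed term. For $k=3$ the sign is $+$ and the expression becomes
\[
\frac{n-7}{3}\binom{n-5}{0}\binom{n-8}{2}B_{n-7}=\frac{(n-7)(n-8)(n-9)}{6}B_{n-7}=\binom{n-7}{3}B_{n-7},
\]
matching the third term.

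For $k=2$ the sign is $-$ and the factor becomes
\[
\frac{n-5}{3}\binom{n-3}{1}\binom{n-7}{1}B_{n-5}=\frac{(n-3)(n-5)(n-7)}{3}B_{n-5},
\]
which with the minus sign gives exactly the middle term on the right-hand side of the corollary. Combining the three contributions proves the identity.

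There is no real obstacle here; the only delicate point is keeping the indexing straight (the shifts $n-2k-r+3$ and the sign $(-1)^{k-1}$) and remembering the convention $\binom{\gamma'}{\gamma}=0$ for $\gamma'<\gamma$, which is used implicitly to discard non-existent terms in the three binomials. Thus the corollary is essentially a three-line arithmetic substitution into Theorem \ref{th40}.
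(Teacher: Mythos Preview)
Your proposal is correct and follows exactly the paper's approach: the paper states the corollary as the direct specialization of Theorem~\ref{th40} at $r=4$ without further argument, and your substitution and arithmetic for $k=1,2,3$ are accurate.
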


\begin{Cor}
For $n\ge 10$, we have
\begin{multline*}
\sum_{l=0}^7(-1)^l\binom{7}{l}\sum_{j_1+j_2+j_3+j_4+j_5=n-2l\atop j_1,j_2,j_3,j_4,j_5\ge 1}B_{j_1}B_{j_2}B_{j_3}B_{j_4}B_{j_5}\\
=\binom{n-1}{4}B_{n-4}-\frac{(n-3)(n-4)(n-6)(n-9)}{8}B_{n-6}\\
+\frac{(n-5)(n-8)(n-10)(n-11)}{8}B_{n-6}-\binom{n-10}{4}B_{n-10}\,.\end{multline*}
\label{th580}
\end{Cor}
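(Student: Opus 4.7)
The plan is to apply Theorem \ref{th40} directly with $r=5$. The hypothesis $n\ge 3r-5 = 10$ matches that of the corollary, so the right-hand side of Theorem \ref{th40} collapses to a sum over $k=1,2,3,4$ of the form
$$\sum_{k=1}^{4}(-1)^{k-1}\,\frac{n-2k-2}{4}\binom{n-2k+1}{4-k}\binom{n-k-7}{k-1}B_{n-2k-2}\,.$$

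First I would evaluate the two ``endpoint'' terms $k=1$ and $k=4$, where one of the binomial factors collapses to $\binom{\cdot}{0}=1$. For $k=1$ the second binomial is $\binom{n-8}{0}=1$ and the identity $\frac{n-4}{4}\binom{n-1}{3}=\binom{n-1}{4}$ produces the summand $\binom{n-1}{4}B_{n-4}$; symmetrically, for $k=4$ the first binomial is $\binom{n-7}{0}=1$ and $\frac{n-10}{4}\binom{n-11}{3}=\binom{n-10}{4}$ yields $-\binom{n-10}{4}B_{n-10}$.

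Next I would handle the two interior terms. For $k=2$ the binomials are $\binom{n-3}{2}$ and $\binom{n-9}{1}=n-9$, so the coefficient of $B_{n-6}$ becomes $-\frac{n-6}{4}\cdot\frac{(n-3)(n-4)}{2}(n-9)=-\frac{(n-3)(n-4)(n-6)(n-9)}{8}$. For $k=3$ the binomials are $\binom{n-5}{1}=n-5$ and $\binom{n-10}{2}$, so the coefficient of $B_{n-2k-2}=B_{n-8}$ is $\frac{n-8}{4}(n-5)\cdot\frac{(n-10)(n-11)}{2}=\frac{(n-5)(n-8)(n-10)(n-11)}{8}$.

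The entire argument is a routine evaluation of four products of small binomial coefficients, so there is no genuine obstacle; the only mild care needed is tracking when the ``edge'' factors $\binom{\cdot}{0}$ collapse to $1$, which is what makes the displayed four-term formula agree structurally with the $r=4$ formula of Corollary \ref{th480}. I note in passing that Theorem \ref{th40} forces the subscript of the $k=3$ term to be $B_{n-8}$, so the second occurrence of $B_{n-6}$ in the display of the corollary appears to be a typographical slip for $B_{n-8}$.
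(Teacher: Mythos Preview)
Your proposal is correct and follows exactly the paper's approach: the paper simply states that the corollary is the specialization $r=5$ of Theorem~\ref{th40}, and you have carried out that specialization term by term. Your observation that the third summand should carry $B_{n-8}$ rather than $B_{n-6}$ is also correct---this is indeed a typographical slip in the displayed formula.
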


\begin{Cor}
For $n\ge 13$, we have
\begin{multline*}
\sum_{l=0}^9(-1)^l\binom{9}{l}\sum_{j_1+\cdots+j_6=n-2l\atop j_1,\dots,j_6\ge 1}B_{j_1}\cdots B_{j_6}\\
=\binom{n-1}{5}B_{n-5}-\frac{(n-3)(n-4)(n-5)(n-7)(n-11)}{30}B_{n-7}\\
+\frac{(n-5)(n-6)(n-9)(n-12)(n-13)}{20}B_{n-9}\\
-\frac{(n-7)(n-11)(n-13)(n-14)(n-15)}{30}B_{n-11}+\binom{n-13}{5}B_{n-13}\,.\end{multline*}
\label{th680}
\end{Cor}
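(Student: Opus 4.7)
The plan is simply to specialize Theorem \ref{th40} to $r=6$ and reduce each of the five resulting summands on the right-hand side to the form displayed in the corollary. No new ideas are needed: the general identity already proves the claim, so the task is purely computational bookkeeping on the closed form
$$
\sum_{k=1}^{r-1}(-1)^{k-1}\frac{n-2k-r+3}{r-1}\binom{n-2k+1}{r-k-1}\binom{n-k-2r+3}{k-1}B_{n-2k-r+3}\,.
$$

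First I would record that for $r=6$ the lower bound $n\ge 3r-5$ becomes $n\ge 13$, and that $2r-3=9$ gives the alternating sum on the left-hand side exactly as stated. For $r=6$ the generic summand reduces to
$$
(-1)^{k-1}\,\frac{n-2k-3}{5}\binom{n-2k+1}{5-k}\binom{n-k-9}{k-1}B_{n-2k-3}\qquad(k=1,\dots,5)\,.
$$
The program is then to process $k=1,2,3,4,5$ in turn and simplify.

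For the extreme values $k=1$ and $k=5$, one of the two binomial factors is $1$, and the linear factor $(n-2k-3)/5$ is absorbed to complete a binomial coefficient: at $k=1$ we use $\frac{n-5}{5}\binom{n-1}{4}=\binom{n-1}{5}$, giving $\binom{n-1}{5}B_{n-5}$, and at $k=5$ we use $\frac{n-13}{5}\binom{n-14}{4}=\binom{n-13}{5}$, giving $\binom{n-13}{5}B_{n-13}$. For $k=2,3,4$ neither binomial is trivial; I would simply expand $\binom{n-2k+1}{5-k}$ and $\binom{n-k-9}{k-1}$ as falling factorials divided by $(5-k)!$ and $(k-1)!$, multiply through by $(n-2k-3)/5$, and collect the result over a common denominator. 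This yields the three middle terms $-\frac{(n-3)(n-4)(n-5)(n-7)(n-11)}{30}B_{n-7}$, $+\frac{(n-5)(n-6)(n-9)(n-12)(n-13)}{20}B_{n-9}$, and $-\frac{(n-7)(n-11)(n-13)(n-14)(n-15)}{30}B_{n-11}$ claimed in the statement.

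There is no real obstacle beyond keeping signs and denominators straight; the only minor pitfall is verifying that the numerical denominators $30,20,30$ arise correctly from $5\cdot 3!\cdot 1!=30$, $5\cdot 2!\cdot 2!=20$, and $5\cdot 1!\cdot 3!=30$, which are precisely the denominators produced by $(r-1)\,(r-k-1)!\,(k-1)!$ for $r=6$ and $k=2,3,4$. Once these five evaluations are assembled, the identity in Corollary \ref{th680} follows directly from Theorem \ref{th40}.
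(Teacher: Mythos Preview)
Your proposal is correct and follows exactly the paper's approach: the paper obtains Corollary~\ref{th680} simply by specializing Theorem~\ref{th40} to $r=6$, and your term-by-term evaluation of the five summands (including the denominator check $5\cdot 3!\cdot 1!=30$, $5\cdot 2!\cdot 2!=20$, $5\cdot 1!\cdot 3!=30$) is precisely the computation that substantiates this specialization.
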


\section{Another result}

In this section, we shall give an expression of $\sum_{j_1+\cdots+j_r=n\atop j_1,\dots,j_r\ge 1}B_{j_1}\cdots B_{j_r}$.

The left-hand side of (\ref{f2-fd1}) is  equal to
$$
\left(\sum_{u=0}^\infty B_u x^u\right)\left(\sum_{v=0}^\infty B_v x^v\right)=\sum_{n=0}^\infty\sum_{j=0}^n B_j B_{n-j}x^n\,.
$$
The right-hand side of (\ref{f2-fd1}) is  equal to
\begin{align*}
&x^2\left(\sum_{j=0}^\infty x^{2 j}\right)\left(\sum_{m=1}^\infty m B_m x^{m-1}\right)\\
&=x\left(\sum_{j=0}^\infty\frac{1+(-1)^j}{2}x^j\right)\left(\sum_{m=1}^\infty m B_m x^m\right)\\
&=x\sum_{n=0}^\infty\left(\sum_{m=0}^n\frac{1+(-1)^{m}}{2}(n-m)B_{n-m}\right)x^n\\
&=\sum_{n=1}^\infty\left(\sum_{m=0}^{n-1}\frac{1+(-1)^{m}}{2}(n-m-1)B_{n-m-1}\right)x^n\\
&=\sum_{n=1}^\infty\left(\sum_{m=0}^{\fl{\frac{n-1}{2}}}(n-2 m-1)B_{n-2 m-1}\right)x^n\,.
\end{align*}
Comparing the coefficients of both sides, we have the following.

\begin{theorem}
For $n\ge 2$, we have
$$
\sum_{j=1}^{n-1}B_j B_{n-j}=\sum_{m=0}^{\fl{\frac{n-1}{2}}}(n-2 m-1)B_{n-2 m-1}\,.
$$
\label{th50}
\end{theorem}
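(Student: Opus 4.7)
The plan is to extract coefficients of $x^n$ from the identity $f(x)^2 = \frac{x^2}{1-x^2} f'(x)$ recorded in (\ref{f2-fd1}); both sides of the theorem will then appear as the $n$-th coefficients of the respective series.

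For the left-hand side I would form the Cauchy product $f(x)^2 = \sum_{n \ge 0}\bigl(\sum_{j=0}^n B_j B_{n-j}\bigr) x^n$. Since $B_0 = 0$, the $j=0$ and $j=n$ terms vanish, leaving the inner sum as $\sum_{j=1}^{n-1} B_j B_{n-j}$ for every $n \ge 2$, which matches the left-hand side of the statement.

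For the right-hand side I would expand $\frac{1}{1-x^2} = \sum_{j\ge 0} x^{2j}$ and $f'(x) = \sum_{m \ge 1} m B_m x^{m-1}$, then incorporate the factor $x^2$ to obtain
$$\frac{x^2}{1-x^2}f'(x) = \sum_{j \ge 0}\sum_{m \ge 1} m B_m\, x^{2j+m+1}.$$
Pinning the exponent to $n$ gives $m = n - 2j - 1$, and the constraint $m \ge 1$ caps $j$ at $\fl{\frac{n-1}{2}}$. Renaming $j$ as $m$ then yields the expression $\sum_{m=0}^{\fl{\frac{n-1}{2}}} (n - 2m - 1) B_{n-2m-1}$, and comparing with the left-hand coefficient closes the proof.

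The only care needed is the parity-driven index accounting: the even powers in the expansion of $\frac{1}{1-x^2}$ are what introduce the floor function, and one must verify that the inequality $m = n - 2j - 1 \ge 1$ translates correctly into the upper bound $j \le \fl{\frac{n-1}{2}}$ (equivalently, one can insert the parity selector $\frac{1+(-1)^j}{2}$ as in the preceding derivation to make the restriction to even indices explicit). Beyond this mild bookkeeping the argument is a straightforward coefficient comparison with no substantive obstacle.
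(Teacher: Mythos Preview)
Your proposal is correct and follows essentially the same route as the paper: both compute the coefficient of $x^n$ on each side of the relation $f(x)^2=\dfrac{x^2}{1-x^2}f'(x)$, using the Cauchy product on the left and the geometric expansion of $\dfrac{1}{1-x^2}$ together with $f'(x)=\sum_{m\ge 1}m B_m x^{m-1}$ on the right. The only cosmetic difference is that the paper inserts the parity indicator $\tfrac{1+(-1)^j}{2}$ explicitly before collapsing to the floor, while you pass directly to the bound $j\le\fl{\frac{n-1}{2}}$; note that strictly $m\ge 1$ gives $j\le\fl{\frac{n-2}{2}}$, but the possibly extra term at $j=\fl{\frac{n-1}{2}}$ for odd $n$ vanishes since then $n-2j-1=0$, so the stated upper limit is harmless.
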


In general, we have the following.

\begin{theorem}
For $n\ge r\ge 2$, we have
\begin{multline*}
\sum_{j_1+\cdots+j_r=n\atop j_1,\dots,j_r\ge 1}B_{j_1}\cdots B_{j_r}\\
=\sum_{m=0}^{\fl{\frac{n-r+1}{2}}}\binom{n-m-1}{r-2}\binom{m+r-2}{r-2}\frac{n-2 m-r+1}{r-1}B_{n-2 m-r+1}\,.
\end{multline*}
\label{th60}
\end{theorem}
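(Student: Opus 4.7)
The plan is to prove the identity by induction on $r$, with base case $r=2$ given by Theorem \ref{th50}. The engine of the induction is a clean generating-function recursion combined with a telescoping binomial identity.

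First, multiplying (\ref{f2-fd1}) by $f(x)^{r-2}$ and using $f^{r-2}f' = (f^{r-1})'/(r-1)$ yields
\[
f(x)^r = \frac{x^2}{(r-1)(1-x^2)}\bigl(f(x)^{r-1}\bigr)'.
\]
Writing $C_{n,r} := \sum_{j_1+\cdots+j_r=n,\,j_i\ge 1} B_{j_1}\cdots B_{j_r}$ so that $f(x)^r = \sum_{n\ge r} C_{n,r} x^n$, and equating coefficients of $x^n$ in $(r-1)(1-x^2)f^r = x^2(f^{r-1})'$, I obtain the recurrence $(r-1)(C_{n,r}-C_{n-2,r}) = (n-1)C_{n-1,r-1}$. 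Iterating this (using $C_{n,r}=0$ for $n<r$) gives
\[
C_{n,r} = \sum_{k=0}^{\fl{(n-r)/2}}\frac{n-2k-1}{r-1}\,C_{n-2k-1,\,r-1}.
\]

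Substituting the inductive formula for $C_{n-2k-1,r-1}$ and re-indexing the double sum by $m=k+m'$, the theorem reduces to proving that for each relevant $m$,
\[
(r-2)\binom{n-m-1}{r-2}\binom{m+r-2}{r-2} = \sum_{k=0}^{m}(n-2k-1)\binom{n-k-m-2}{r-3}\binom{m-k+r-3}{r-3}.
\]
I would then split $n-2k-1=(n-m-k-1)+(m-k)$ and apply the absorption identities $(n-m-k-1)\binom{n-k-m-2}{r-3} = (r-2)\binom{n-k-m-1}{r-2}$ and $(m-k)\binom{m-k+r-3}{r-3} = (r-2)\binom{m-k+r-3}{r-2}$ to rewrite the right-hand side (after cancelling $r-2$) as
\[
\sum_{k=0}^{m}\Bigl[\binom{n-m-k-1}{r-2}\binom{m-k+r-3}{r-3}+\binom{n-m-k-2}{r-3}\binom{m-k+r-3}{r-2}\Bigr].
\]

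The finale is a telescoping observation: setting $A_j=\binom{n-m-j-1}{r-2}\binom{m-j+r-2}{r-2}$, two applications of Pascal's rule (first to the second factor of $A_j$, then to the resulting first-factor difference) show that the $k$-th summand above equals $A_k-A_{k+1}$. The sum collapses to $A_0-A_{m+1}$, and since $A_{m+1}$ carries the vanishing factor $\binom{r-3}{r-2}=0$, this equals $A_0=\binom{n-m-1}{r-2}\binom{m+r-2}{r-2}$, matching the left-hand side. The main obstacle I anticipate is not any single step but rather spotting the correct telescoping quantity $A_j$ — in particular, realising that Pascal's rule must be applied to the \emph{second} binomial factor first — since the bookkeeping afterwards, including verifying that the range of $m$ matches $\fl{(n-r+1)/2}$, is mechanical.
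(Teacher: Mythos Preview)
Your argument is correct and takes a genuinely different route from the paper. The paper's proof starts from Lemma~\ref{lem40}, which expresses $f(x)^r$ explicitly as a linear combination of $f^{(r-k-1)}(x)$ with rational coefficients in $x$; it then expands each piece as a power series, carries out long manipulations with double factorials, and finally collapses the resulting multiple sum into the stated single sum. Your approach bypasses Lemma~\ref{lem40} entirely: you use only the one-line recursion $f^r=\frac{x^2}{(r-1)(1-x^2)}(f^{r-1})'$, obtain the scalar recurrence $(r-1)(C_{n,r}-C_{n-2,r})=(n-1)C_{n-1,r-1}$, and push the work into a single binomial identity that you dispatch by a clean Pascal-rule telescope with $A_j=\binom{n-m-j-1}{r-2}\binom{m-j+r-2}{r-2}$. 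What the paper's route buys is uniformity: the same lemma drives both Theorem~\ref{th40} and Theorem~\ref{th60}, so the machinery is amortised. What your route buys is economy and transparency for this particular theorem---no derivatives beyond the first, no double factorials, and a self-contained inductive structure anchored at Theorem~\ref{th50}. One small point worth recording in your write-up: the apparent range mismatch at $m=\lfloor(n-r+1)/2\rfloor$ when $n-r$ is odd is harmless precisely because that term carries the factor $(n-2m-r+1)B_{n-2m-r+1}=0\cdot B_0=0$ on both sides; stating this explicitly preempts a referee's question.
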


\begin{proof}
The left-hand side of (\ref{lem-iden}) in Lemma \ref{lem40} is equal to
$$
\sum_{n=0}^\infty\sum_{j_1+\cdots+j_r=n\atop j_1,\dots,j_r\ge 1}B_{j_1}\cdots B_{j_r}x^n\,.
$$

The first term on the right-hand side of (\ref{lem-iden}) in Lemma \ref{lem40} is equal to
\begin{align*}
&\frac{x^{2r-2}f^{(r-1)}(x)}{(r-1)!(1-x^2)^{r-1}}\\
&=\frac{x^{2r-2}}{(r-1)!}\sum_{i=0}^\infty\binom{i+r-2}{r-2}x^{2i}\sum_{m=0}^\infty\frac{(m+r-1)!}{m!}B_{m+r-1}x^m\\
&=\frac{x^{2r-2}}{(r-1)!}\sum_{k=0}^\infty\frac{1}{(r-2)!2^{r-2}}\frac{(k+2r-4)!!}{k!!}\frac{1+(-1)^k}{2}x^k\\
&\quad\times \sum_{m=0}^\infty\frac{(m+r-1)!}{m!}B_{m+r-1}x^m\\
&=\frac{x^{2r-2}}{(r-1)!}\sum_{n=0}^\infty\sum_{m=0}^n\frac{1}{(r-2)!2^{r-2}}\frac{(n-m+2r-4)!!}{(n-m)!!}\\
&\qquad \times\frac{1+(-1)^{n-m}}{2}\frac{(m+r-1)!}{m!}B_{m+r-1}x^n\\
&=\frac{1}{(r-1)!(r-2)!2^{r-2}}\sum_{n=2r-2}^\infty\sum_{m=0}^{n-2r+2}\frac{(n-m-2)!!}{(n-m-2r+2)!!}\\
&\qquad \times\frac{1+(-1)^{n-m}}{2}\frac{(m+r-1)!}{m!}B_{m+r-1}x^n\\
&=\frac{1}{(r-1)!(r-2)!2^{r-2}}\sum_{n=2r-2}^\infty\sum_{m=r-1}^{n-r+1}\frac{(n-m+r-3)!!}{(n-m-r+1)!!}\\
&\qquad \times\frac{1+(-1)^{n-m-r+1}}{2}\frac{m!}{(m-r+1)!}B_m x^n\,.
\end{align*}
Concerning the second term, we have
\begin{align*}
&\frac{\sum_{j=0}^{k-1}\binom{k}{j}\binom{r-2}{k-j-1}x^{2r-k-2+2j}}{(1-x^2)^{r+k-1}}f^{(r-k-1)}(x)\\
&=\sum_{j=0}^{k-1}\binom{k}{j}\binom{r-2}{k-j-1}x^{2r-k-2+2j}
\sum_{i=0}^\infty\binom{i+r+k-2}{r+k-2}x^{2i}\\
&\qquad \times\sum_{m=0}^\infty\frac{(m+r-k-1)!}{m!}B_{m+r-k-1}x^m\\
&=\sum_{j=0}^{k-1}\binom{k}{j}\binom{r-2}{k-j-1}x^{2r-k-2+2j}
\sum_{l=0}^\infty\frac{1}{(r+k-2)!2^{r+k-2}}\\
&\qquad \times\frac{(l+2r+2k-4)!!}{l!!}\frac{1+(-1)^l}{2}x^l\sum_{m=0}^\infty\frac{(m+r-k-1)!}{m!}B_{m+r-k-1}x^m\\
&=\sum_{j=0}^{k-1}\binom{k}{j}\binom{r-2}{k-j-1}x^{2r-k-2+2j}
\sum_{n=0}^\infty\sum_{m=0}^n\frac{1}{(r+k-2)!2^{r+k-2}}\\
&\qquad \times\frac{(n-m+2r+2k-4)!!}{(n-m)!!}\frac{1+(-1)^{n-m}}{2}\frac{(m+r-k-1)!}{m!}B_{m+r-k-1}x^n\\
&=\frac{1}{(r+k-2)!2^{r+k-2}}\sum_{j=0}^{k-1}\binom{k}{j}\binom{r-2}{k-j-1}\sum_{n=2r-k-2+2j}^\infty\sum_{m=0}^{n-2r+k+2-2j}\\
&\quad \frac{(n-m+3k-2-2j)!!}{(n-m-2r+k+2-2j)!!}\frac{1+(-1)^{n-m+k}}{2}\frac{(m+r-k-1)!}{m!}B_{m+r-k-1}x^n\,.
\end{align*}
Since
$$
\frac{(n-m+r+2k-3-2j)!!}{(n-m-r+k+3-2j)!!}=0
$$
if $m=n-2r+k+2-2j$ ($j=1,2,\dots,k-2$),
this is equal to
\begin{align*}
&\frac{1}{(r+k-2)!2^{r+k-2}}\sum_{j=0}^{k-1}\binom{k}{j}\binom{r-2}{k-j-1}\sum_{n=2r-k-2}^\infty\sum_{m=0}^{n-2r+k+2}\\
&\times\frac{(n-m+3k-2-2j)!!}{(n-m+k)!!}\frac{1+(-1)^{n-m+k}}{2}\frac{(m+r-k-1)!}{m!}B_{m+r-k-1}x^n\\
&=\frac{1}{(r+k-2)!2^{r+k-2}}\sum_{n=2r-k-2}^\infty\sum_{m=r-k-1}^{n-r+1}\sum_{j=0}^{k-1}\binom{k}{j}\binom{r-2}{k-j-1}\\
&\quad \times\frac{(n-m+r+2k-3-2j)!!}{(n-m-r+k+3-2j)!!}\frac{1+(-1)^{n-m-r+1}}{2}\frac{m!}{(m-r+k+1)!}B_m x^n\,.
\end{align*}
This is also equal to
\begin{align*}
&\frac{1}{(r+k-2)!2^{r+k-2}}\sum_{n=2r-k-2}^\infty\sum_{m=r-k-1}^{n-r+1}\sum_{j=0}^{k-1}\binom{k}{j}\binom{r-2}{k-j-1}\\
&\quad \times\frac{(n-m+r+2k-3-2j)!!}{(n-m-r+k+3-2j)!!}\frac{1+(-1)^{n-m-r+1}}{2}\frac{m!}{(m-r+k+1)!}B_m x^n\\
&=\frac{1}{(r+k-2)!2^{r+k-2}}\sum_{n=2r-k-2}^\infty\sum_{m=r-k-1}^{n-r+1}\frac{(n-m+r-1)!!}{(n-m-r+1)!!}\binom{r+k-2}{k-1}\\
&\quad \times\frac{(n-m+r-3)!!}{(n-m+r-2k-1)!!}\frac{1+(-1)^{n-m-r+1}}{2}\frac{m!}{(m-r+k+1)!}B_m x^n\,.
\end{align*}
Therefore, the right-hand side of the relation in Theorem \ref{th50} is
\begin{align*}
&\frac{1}{(r-1)!(r-2)!2^{r-2}}\sum_{n=2r-2}^\infty\sum_{m=r-1}^{n-r+1}\frac{(n-m+r-3)!!}{(n-m-r+1)!!}\frac{1+(-1)^{n-m-r+1}}{2}\\
&\qquad\times\frac{m!}{(m-r+1)!}B_m x^n\\
&\quad +\sum_{k=1}^{r-1}\frac{1}{k(r-k-2)!}\frac{1}{(r+k-2)!2^{r+k-2}}\sum_{n=2r-k-2}^\infty\\
&\qquad \sum_{m=r-k-1}^{n-r+1}\frac{(n-m+r-1)!!}{(n-m-r+1)!!}\binom{r+k-2}{k-1}\frac{(n-m+r-3)!!}{(n-m+r-2k-1)!!}\\
&\qquad\qquad \times\frac{1+(-1)^{n-m-r+1}}{2}\frac{m!}{(m-r+k+1)!}B_m x^n\\
&=\frac{1}{(r-1)!(r-2)!2^{r-2}}\sum_{n=2r-2}^\infty\sum_{m=r-1}^{n-r+1}\frac{(n-m+r-3)!!}{(n-m-r+1)!!}\frac{1+(-1)^{n-m-r+1}}{2}\\
&\qquad\times\frac{m!}{(m-r+1)!}B_m x^n\\
&\quad +\sum_{n=r-1}^\infty\frac{1}{(r-1)!2^{r-2}}\sum_{m=1}^{r-2}\sum_{k=r-m-1}^{r-2}\frac{1}{k!(r-k-2)!2^k}\frac{(n-m+r-1)!!}{(n-m-r+1)!!}\\
&\qquad \times\frac{(n-m+r-3)!!}{(n-m+r-2k-1)!!}\frac{1+(-1)^{n-m-r+1}}{2}\frac{m!}{(m-r+k+1)!}B_m x^n\\
&\quad +\sum_{n=r-1}^\infty\frac{1}{(r-1)!2^{r-2}}\sum_{m=r-1}^{n-r+1}\sum_{k=1}^{r-2}\frac{1}{k!(r-k-2)!2^k}\frac{(n-m+r-1)!!}{(n-m-r+1)!!}\\
&\qquad \times\frac{(n-m+r-3)!!}{(n-m+r-2k-1)!!}\frac{1+(-1)^{n-m-r+1}}{2}\frac{m!}{(m-r+k+1)!}B_m x^n\,.
\end{align*}
Since for $1\le m\le r-2$ we have
\begin{align*}
&\frac{1}{(r-1)!2^{r-2}}\sum_{k=r-m-1}^{r-2}\frac{1}{k!(r-k-2)!2^k}\frac{(n-m+r-1)!!}{(n-m-r+1)!!}\\
&\qquad \times\frac{(n-m+r-3)!!}{(n-m+r-2k-1)!!}\frac{m!}{(m-r+k+1)!}\\
&=\frac{1}{(r-1)!(r-2)!2^{2r-4}}\frac{(n+m+r-3)!!}{(n+m-r+1)!!}\frac{(n-m+r-3)!!}{(n-m-r+1)!!}m
\end{align*}
and for $r-1\le m\le n-r+1$ we have
\begin{align*}
&\frac{1}{(r-1)!(r-2)!2^{r-2}}\frac{(n-m+r-3)!!}{(n-m-r+1)!!}\frac{m!}{(m-r+1)!}\\
&\quad +\frac{1}{(r-1)!2^{r-2}}\sum_{k=r-m-1}^{r-2}\frac{1}{k!(r-k-2)!2^k}\frac{(n-m+r-1)!!}{(n-m-r+1)!!}\\
&\qquad \times\frac{(n-m+r-3)!!}{(n-m+r-2k-1)!!}\frac{m!}{(m-r+k+1)!}\\
&=\frac{1}{(r-1)!(r-2)!2^{2r-4}}\frac{(n+m+r-3)!!}{(n+m-r+1)!!}\frac{(n-m+r-3)!!}{(n-m-r+1)!!}m\,,
\end{align*}
By comparing the coefficients, we have
\begin{align*}
&\sum_{j_1+\cdots+j_r=n\atop j_1,\dots,j_r\ge 1}B_{j_1}\cdots B_{j_r}\\
&=\frac{1}{(r-1)!(r-2)!2^{2r-4}}\\
&\quad\times\sum_{m=0}^{n-r+1}\frac{(n+m+r-3)!!(n-m+r-3)!!}{(n+m-r+1)!!(n-m-r+1)!!}\frac{1+(-1)^{n-m-r+1}}{2}m F_{m}\\
&=\frac{1}{(r-1)!(r-2)!2^{2r-4}}\\
&\quad\times\sum_{m=0}^{n-r+1}\frac{(2 n-m-2)!!(m+2 r-4)!!}{(2 n-m-2 r+2)!!m!!}(n-m-r+1)B_{n-m-r+1}\\
&=\frac{1}{(r-1)!(r-2)!2^{2r-4}}\\
&\quad\times\sum_{m=0}^{\fl{\frac{n-r+1}{2}}}\frac{(2 n-2 m-2)!!(2 m+2 r-4)!!}{(2 n-2 m-2 r+2)!!(2 m)!!}(n-2 m-r+1)B_{n-2 m-r+1}\\
&=\sum_{m=0}^{\fl{\frac{n-r+1}{2}}}\binom{n-m-1}{r-2}\binom{m+r-2}{r-2}\frac{n-2 m-r+1}{r-1}B_{n-2 m-r+1}\,.
\end{align*}
\end{proof}

\section{Some other generating functions}

Another kinds of the generating functions of balancing numbers and Lucas-balancing numbers are given by
$$
b(t):=\frac{e^{\alpha t}-e^{\beta t}}{4\sqrt{2}}=\sum_{n=0}^\infty B_n\frac{t^n}{n!}
$$
and
$$
c(t):=\frac{e^{\alpha t}+e^{\beta t}}{2}=\sum_{n=0}^\infty C_n\frac{t^n}{n!}
$$
because they satisfy the differential equation $y''-6 y'+y=0$.

Since $b'(t)=3 b(t)+c(t)$ and $c'(t)=8 b(t)+3 c(t)$,  we have for $n\ge 0$
$$
B_{n+1}=3 B_n+C_n
$$
and
$$
C_{n+1}=8 B_n+3 C_n\,.
$$

Since
$$
c(t)^2=\frac{e^{2\alpha t}+e^{2\beta t}}{4}+\frac{e^{6 t}}{2}\,,
$$
we have
$$
\sum_{n=0}^\infty\sum_{k=0}^n\binom{n}{k}C_k C_{n-k}=\frac{1}{2}\sum_{n=0}^\infty C_n\frac{(2 t)^n}{n!}+\frac{1}{2}\sum_{n=0}^\infty\frac{(6 t)^n}{n!}\,,
$$
yielding
$$
\sum_{k=0}^n\binom{n}{k}C_k C_{n-k}=\frac{2^n C_n+6^n}{2}\quad(n\ge 0)\,.
$$
Similarly, by
$$
b(t)^2=\frac{e^{2\alpha t}+e^{2\beta t}}{32}-\frac{e^{6 t}}{16}\,,
$$
we have
$$
\sum_{k=0}^n\binom{n}{k}B_k B_{n-k}=\frac{2^n C_n-6^n}{16}\quad(n\ge 0)\,.
$$

Since by $2\alpha+\beta=\alpha+6$ and $\alpha+2\beta=\beta+6$
\begin{align*}
b(t)^3&=\frac{e^{3\alpha t}-e^{3\beta t}}{(4\sqrt{2})^3}-\frac{3(e^{(2\alpha+\beta)t}-e^{(\alpha+2\beta)t})}{(4\sqrt{2})^3}\\
&=\frac{1}{32}\frac{e^{3\alpha t}-e^{3\beta t}}{4\sqrt{2}}-\frac{3}{32}e^{6t}\frac{e^{\alpha t}-e^{\beta t}}{4\sqrt{2}}\\
&=\frac{1}{32}\sum_{n=0}^\infty B_n\frac{(3 t)^n}{n!}-\frac{3}{32}\sum_{i=0}^\infty\frac{(6 t)^i}{i!}\sum_{k=0}^\infty B_k\frac{t^k}{k!}\\
&=\frac{1}{32}\sum_{n=0}^\infty 3^n B_n\frac{t^n}{n!}-\frac{3}{32}\sum_{n=0}^\infty\sum_{k=0}^n\binom{n}{k}6^{n-k}B_k\frac{t^n}{n!}\,,
\end{align*}
we have
$$
\sum_{k_1+k_2+k_3=n\atop k_1,k_2,k_3\ge 1}\binom{n}{k_1,k_2,k_3}B_{k_1}B_{k_2}B_{k_3}=\frac{1}{32}\left(3^n B_n-3\sum_{k=0}^n\binom{n}{k}6^{n-k}B_k\right)\,.
$$
Notice that $B_0=0$.
Similarly, we can obtain that
$$
\sum_{k_1+k_2+k_3=n\atop k_1,k_2,k_3\ge 0}\binom{n}{k_1,k_2,k_3}C_{k_1}C_{k_2}C_{k_3}=\frac{1}{4}\left(3^n C_n+3\sum_{k=0}^n\binom{n}{k}6^{n-k}C_k\right)\,.
$$

Let $r\ge 1$.   If $r$ is odd, then
\begin{align*}
b(t)^r&=\left(\frac{e^{\alpha t}-e^{\beta t}}{4\sqrt{2}}\right)^r\\
&=\frac{1}{(4\sqrt{2})^r}\sum_{j=0}^{\frac{r-1}{2}}(-1)^j\binom{r}{j}(e^{((r-j)\alpha+j\beta)t}-e^{(j\alpha+(r-j)\beta)t})\\
&=\frac{1}{(4\sqrt{2})^r}\sum_{j=0}^{\frac{r-1}{2}}(-1)^j\binom{r}{j}e^{6j t}(e^{(r-2j)\alpha t}-e^{(r-2j)\beta t})\\
&=\frac{1}{(4\sqrt{2})^{r-1}}\sum_{j=0}^{\frac{r-1}{2}}(-1)^j\binom{r}{j}\sum_{i=0}^\infty\frac{(6 j t)^i}{i!}\sum_{k=0}^\infty B_k\frac{\bigl((r-2 j)t\bigr)^k}{k!}\\
&=\frac{1}{(4\sqrt{2})^{r-1}}\sum_{j=0}^{\frac{r-1}{2}}(-1)^j\binom{r}{j}\sum_{n=0}^\infty\sum_{k=0}^n\binom{n}{k}(6 j)^{n-k}(r-2 j)^k B_k\frac{t^n}{n!}\,.
\end{align*}
Therefore, we get
\begin{multline*}
\sum_{k_1+\cdots+k_r=n\atop k_1,\dots,k_r\ge 1}\binom{n}{k_1,\dots,k_r}B_{k_1}\cdots B_{k_r}\\
=\frac{1}{(4\sqrt{2})^{r-1}}\sum_{j=0}^{\frac{r-1}{2}}(-1)^j\binom{r}{j}\sum_{k=0}^n\binom{n}{k}(6 j)^{n-k}(r-2 j)^k B_k\,.
\end{multline*}
Similarly,  we get
\begin{multline*}
\sum_{k_1+\cdots+k_r=n\atop k_1,\dots,k_r\ge 0}\binom{n}{k_1,\dots,k_r}C_{k_1}\cdots C_{k_r}\\
=\frac{1}{2^{r-1}}\sum_{j=0}^{\frac{r-1}{2}}\binom{r}{j}\sum_{k=0}^n\binom{n}{k}(6 j)^{n-k}(r-2 j)^k C_k\,.
\end{multline*}
If $r$ is even, then
\begin{align*}
b(t)^r
&=\frac{1}{(4\sqrt{2})^r}\left(\sum_{j=0}^{\frac{r}{2}-1}(-1)^j\binom{r}{j}(e^{((r-j)\alpha+j\beta)t}+e^{(j\alpha+(r-j)\beta)t})\right.\\
&\quad \left.+(-1)^{\frac{r}{2}}\binom{r}{\frac{r}{2}}e^{(\frac{r}{2}\alpha+\frac{r}{2}\beta)t}\right)\\
&=\frac{1}{(4\sqrt{2})^r}\left(\sum_{j=0}^{\frac{r}{2}-1}(-1)^j\binom{r}{j}e^{6j t}(e^{(r-2j)\alpha t}+e^{(r-2j)\beta t})\right.\\
&\quad \left.+(-1)^{\frac{r}{2}}\binom{r}{\frac{r}{2}}e^{3 r t}\right)\\
&=\frac{1}{(4\sqrt{2})^{r}}\left(\sum_{j=0}^{\frac{r}{2}-1}(-1)^j\binom{r}{j}\sum_{i=0}^\infty\frac{(6 j t)^i}{i!}\cdot 2\sum_{k=0}^\infty C_k\frac{\bigl((r-2 j)t\bigr)^k}{k!}\right.\\
&\quad\left. +(-1)^{\frac{r}{2}}\binom{r}{\frac{r}{2}}\sum_{n=0}^\infty\frac{(3 r)^n}{n!}t^n\right)\\
&=\frac{1}{(4\sqrt{2})^{r}}\left(2\sum_{j=0}^{\frac{r}{2}-1}(-1)^j\binom{r}{j}\sum_{n=0}^\infty\sum_{k=0}^n\binom{n}{k}(6 j)^{n-k}(r-2 j)^k C_k\frac{t^n}{n!}\right.\\
&\quad\left. +(-1)^{\frac{r}{2}}\binom{r}{\frac{r}{2}}\sum_{n=0}^\infty(3 r)^n\frac{t^n}{n!}\right)\,.
\end{align*}
Therefore, we get
\begin{multline*}
\sum_{k_1+\cdots+k_r=n\atop k_1,\dots,k_r\ge 1}\binom{n}{k_1,\dots,k_r}B_{k_1}\cdots B_{k_r}\\
=\frac{1}{(4\sqrt{2})^{r}}\left(2\sum_{j=0}^{\frac{r}{2}-1}(-1)^j\binom{r}{j}\sum_{k=0}^n\binom{n}{k}(6 j)^{n-k}(r-2 j)^k C_k\right.\\
\quad \left.
+(-1)^{\frac{r}{2}}\binom{r}{\frac{r}{2}}(3 r)^n\right)\,.
\end{multline*}
Similarly,  we get
\begin{multline*}
\sum_{k_1+\cdots+k_r=n\atop k_1,\dots,k_r\ge 0}\binom{n}{k_1,\dots,k_r}C_{k_1}\cdots C_{k_r}\\
=\frac{1}{2^{r}}\left(2\sum_{j=0}^{\frac{r}{2}-1}\binom{r}{j}\sum_{k=0}^n\binom{n}{k}(6 j)^{n-k}(r-2 j)^k C_k
+\binom{r}{\frac{r}{2}}(3 r)^n\right)\,.
\end{multline*}

\section{More general cases}

Let $\{u_n\}_{n\ge 0}$ and $\{v_n\}_{n\ge 0}$ be integer sequences, satisfying the same recurrence relation: $u_n=a u_{n-1}+b u_{n-2}$ ($n\ge 2$) and $v_n=a v_{n-1}+b v_{n-2}$ ($n\ge 2$) with initial values $u_0$, $u_1$, $v_0$ and $v_1$.
If the general terms are given by
$$
u_n=\frac{\alpha^n-\beta^n}{\alpha-\beta}\quad\hbox{and}\quad v_n=\alpha^n+\beta^n\quad(n\ge 0)
$$
where
$$
\alpha=\frac{a+\sqrt{a^2+4 b}}{2}\quad\hbox{and}\quad \beta=\frac{a-\sqrt{a^2+4 b}}{2}\,,
$$
we can set $u_0=0$, $u_1=1$, $v=0=2$ and $v_1=a$.
Then, the generating functions of $u_n$ and $v_n$ are given by
$$
u(t):=\frac{e^{\alpha t}-e^{\beta t}}{\sqrt{a^2+4 b}}=\sum_{n=0}^\infty u_n\frac{t^n}{n!}\quad\hbox{and}\quad
v(t):=e^{\alpha t}+e^{\beta t}=\sum_{n=0}^\infty v_n\frac{t^n}{n!}
$$
respectively, because they satisfy the differential equation $y''-a y'-b y=0$.

Our main results can be stated as follows.

\begin{theorem}
If $r$ is odd, then
\begin{multline}
\sum_{k_1+\cdots+k_r=n\atop k_1,\dots,k_r\ge 1}\binom{n}{k_1,\dots,k_r}u_{k_1}\cdots u_{k_r}\\
=\frac{1}{(\sqrt{a^2+4 b})^{r-1}}\sum_{k=0}^n\binom{n}{k}\sum_{j=0}^{\frac{r-1}{2}}(-1)^j\binom{r}{j}(a j)^{n-k}(r-2 j)^k u_k
\label{u:odd}
\end{multline}
and
\begin{multline}
\sum_{k_1+\cdots+k_r=n\atop k_1,\dots,k_r\ge 0}\binom{n}{k_1,\dots,k_r}v_{k_1}\cdots v_{k_r}\\
=\sum_{k=0}^n\binom{n}{k}\sum_{j=0}^{\frac{r-1}{2}}\binom{r}{j}(a j)^{n-k}(r-2 j)^k v_k\,.
\label{v:odd}
\end{multline}
If $r$ is even, then
\begin{multline}
\sum_{k_1+\cdots+k_r=n\atop k_1,\dots,k_r\ge 1}\binom{n}{k_1,\dots,k_r}u_{k_1}\cdots u_{k_r}\\
=\frac{1}{(\sqrt{a^2+4 b})^{r}}\left(\sum_{k=0}^n\binom{n}{k}\sum_{j=0}^{\frac{r}{2}-1}(-1)^j\binom{r}{j}(a j)^{n-k}(r-2 j)^k v_k\right.\\
\quad \left.
+(-1)^{\frac{r}{2}}\binom{r}{\frac{r}{2}}\left(\frac{a r}{2}\right)^n\right)
\label{u:even}
\end{multline}
and
\begin{multline}
\sum_{k_1+\cdots+k_r=n\atop k_1,\dots,k_r\ge 0}\binom{n}{k_1,\dots,k_r}v_{k_1}\cdots v_{k_r}\\
=\sum_{k=0}^n\binom{n}{k}\sum_{j=0}^{\frac{r}{2}-1}\binom{r}{j}(a j)^{n-k}(r-2 j)^k v_k
+\binom{r}{\frac{r}{2}}\left(\frac{a r}{2}\right)^n\,.
\label{v:even}
\end{multline}
\label{th-uv}
\end{theorem}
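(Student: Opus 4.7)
The plan is to mimic the symmetric-function calculations done in the preceding section for balancing/Lucas-balancing numbers, but intrinsically in terms of the generating functions $u(t)$ and $v(t)$, using the two identities $\alpha+\beta=a$ and $\alpha\beta=-b$. The central computation is to express $u(t)^r$ and $v(t)^r$ in closed form by expanding the binomials $(e^{\alpha t}\pm e^{\beta t})^r$ and then folding pairs of exponents using the key algebraic observation
$$
(r-j)\alpha+j\beta = (r-2j)\alpha+ja, \qquad j\alpha+(r-j)\beta = (r-2j)\beta+ja,
$$
which follows from $\alpha+\beta=a$. This lets one factor $e^{jat}$ out of each pair and recognize the remaining factor as either $\sqrt{a^2+4b}\,u((r-2j)t)$ or $v((r-2j)t)$.

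For odd $r$, the binomial expansion of $(e^{\alpha t}-e^{\beta t})^r$ groups naturally into $(r-1)/2+1$ pairs with opposite signs (since $(-1)^{r-j}=-(-1)^j$), yielding
$$
(e^{\alpha t}-e^{\beta t})^r=\sqrt{a^2+4b}\sum_{j=0}^{(r-1)/2}(-1)^j\binom{r}{j}e^{jat}\,u((r-2j)t).
$$
Dividing by $(\sqrt{a^2+4b})^r$ gives a closed form for $u(t)^r$. Expanding $e^{jat}=\sum_i (ja)^i t^i/i!$ and $u((r-2j)t)=\sum_k (r-2j)^k u_k t^k/k!$, and applying the binomial convolution, yields
$$
u(t)^r=\frac{1}{(\sqrt{a^2+4b})^{r-1}}\sum_{n=0}^\infty\left(\sum_{k=0}^n\binom{n}{k}\sum_{j=0}^{(r-1)/2}(-1)^j\binom{r}{j}(ja)^{n-k}(r-2j)^ku_k\right)\frac{t^n}{n!}.
$$
On the other hand, since $u_0=0$, the exponential generating series for $u(t)^r$ has coefficients $\sum_{k_1+\cdots+k_r=n,\,k_i\ge 1}\binom{n}{k_1,\dots,k_r}u_{k_1}\cdots u_{k_r}$. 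Comparing coefficients of $t^n/n!$ gives identity (\ref{u:odd}). The identity (\ref{v:odd}) follows in exactly the same way from $(e^{\alpha t}+e^{\beta t})^r$, where the analogous pairing (now with equal signs $(-1)^{r-j}=-(-1)^j\cdot(-1)=(-1)^j$ after combining) produces $v((r-2j)t)$ in place of $u((r-2j)t)$, and no $\sqrt{a^2+4b}$ survives.

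For even $r$, the pairing has a leftover middle term at $j=r/2$. In $(e^{\alpha t}-e^{\beta t})^r$, the non-middle terms combine with matching signs to produce $e^{jat}v((r-2j)t)$ (not $u$), while the middle term contributes $(-1)^{r/2}\binom{r}{r/2}e^{(r/2)(\alpha+\beta)t}=(-1)^{r/2}\binom{r}{r/2}e^{(ar/2)t}$. Expanding the exponentials and the $v((r-2j)t)$'s in Taylor series, and extracting the coefficient of $t^n/n!$, yields (\ref{u:even}). The derivation of (\ref{v:even}) is identical, starting from $(e^{\alpha t}+e^{\beta t})^r$, where the middle term $\binom{r}{r/2}e^{(ar/2)t}$ survives with a plus sign and no square roots appear.

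The computation is essentially mechanical; the only subtle point is the bookkeeping for pairing $j$ with $r-j$ and tracking the four cases $(u \text{ or } v)\times(r \text{ odd or even})$. Since everything reduces to the identity $(r-j)\alpha+j\beta=(r-2j)\alpha+ja$ and the ordinary Cauchy product of exponential generating functions, I do not anticipate any real obstacle beyond careful handling of the middle term and of the overall normalization factor $(\sqrt{a^2+4b})^{r-1}$ or $(\sqrt{a^2+4b})^r$.
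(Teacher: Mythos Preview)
Your proposal is correct and follows essentially the same route as the paper: expand $(e^{\alpha t}\pm e^{\beta t})^r$ binomially, pair the $j$ and $r-j$ terms using $(r-j)\alpha+j\beta=(r-2j)\alpha+ja$ (which the paper writes as $e^{ajt}(e^{(r-2j)\alpha t}\pm e^{(r-2j)\beta t})$), separate the middle term when $r$ is even, and read off the coefficient of $t^n/n!$ after multiplying the two exponential series. The only cosmetic difference is that the paper carries out each of the four cases explicitly rather than summarizing the sign bookkeeping in words.
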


\begin{proof}
If $r$ is odd, then by $\alpha+\beta=a$
\begin{align*}
u(t)^r&=\left(\frac{e^{\alpha t}-e^{\beta t}}{\sqrt{a^2+4 b}}\right)^r\\
&=\frac{1}{(\sqrt{a^2+4 b})^r}\sum_{j=0}^{\frac{r-1}{2}}(-1)^j\binom{r}{j}(e^{((r-j)\alpha+j\beta)t}-e^{(j\alpha+(r-j)\beta)t})\\
&=\frac{1}{(\sqrt{a^2+4 b})^r}\sum_{j=0}^{\frac{r-1}{2}}(-1)^j\binom{r}{j}e^{a j t}(e^{(r-2j)\alpha t}-e^{(r-2j)\beta t})\\
&=\frac{1}{(\sqrt{a^2+4 b})^{r-1}}\sum_{j=0}^{\frac{r-1}{2}}(-1)^j\binom{r}{j}\sum_{i=0}^\infty\frac{(a j t)^i}{i!}\sum_{k=0}^\infty u_k\frac{\bigl((r-2 j)t\bigr)^k}{k!}\\
&=\frac{1}{(\sqrt{a^2+4 b})^{r-1}}\sum_{j=0}^{\frac{r-1}{2}}(-1)^j\binom{r}{j}\sum_{n=0}^\infty\sum_{k=0}^n\binom{n}{k}(a j)^{n-k}(r-2 j)^k u_k\frac{t^n}{n!}\,.
\end{align*}
Therefore, we get
\begin{multline*}
\sum_{k_1+\cdots+k_r=n\atop k_1,\dots,k_r\ge 1}\binom{n}{k_1,\dots,k_r}u_{k_1}\cdots u_{k_r}\\
=\frac{1}{(\sqrt{a^2+4 b})^{r-1}}\sum_{j=0}^{\frac{r-1}{2}}(-1)^j\binom{r}{j}\sum_{k=0}^n\binom{n}{k}(a j)^{n-k}(r-2 j)^k u_k\,.
\end{multline*}
Similarly,  we get
$$
\sum_{k_1+\cdots+k_r=n\atop k_1,\dots,k_r\ge 0}\binom{n}{k_1,\dots,k_r}v_{k_1}\cdots v_{k_r}
=\sum_{j=0}^{\frac{r-1}{2}}\binom{r}{j}\sum_{k=0}^n\binom{n}{k}(a j)^{n-k}(r-2 j)^k v_k\,.
$$
If $r$ is even, then
\begin{align*}
u(t)^r
&=\frac{1}{(\sqrt{a^2+4 b})^r}\left(\sum_{j=0}^{\frac{r}{2}-1}(-1)^j\binom{r}{j}(e^{((r-j)\alpha+j\beta)t}+e^{(j\alpha+(r-j)\beta)t})\right.\\
&\quad \left.+(-1)^{\frac{r}{2}}\binom{r}{\frac{r}{2}}e^{(\frac{r}{2}\alpha+\frac{r}{2}\beta)t}\right)\\
&=\frac{1}{(\sqrt{a^2+4 b})^r}\left(\sum_{j=0}^{\frac{r}{2}-1}(-1)^j\binom{r}{j}e^{a j t}(e^{(r-2j)\alpha t}+e^{(r-2j)\beta t})\right.\\
&\quad \left.+(-1)^{\frac{r}{2}}\binom{r}{\frac{r}{2}}e^{a r t/2}\right)\\
&=\frac{1}{(\sqrt{a^2+4 b})^{r}}\left(\sum_{j=0}^{\frac{r}{2}-1}(-1)^j\binom{r}{j}\sum_{i=0}^\infty\frac{(a j t)^i}{i!}\cdot \sum_{k=0}^\infty v_k\frac{\bigl((r-2 j)t\bigr)^k}{k!}\right.\\
&\quad\left. +(-1)^{\frac{r}{2}}\binom{r}{\frac{r}{2}}\sum_{n=0}^\infty\left(\frac{a r}{2}\right)^n\frac{t^n}{n!}\right)\\
&=\frac{1}{(\sqrt{a^2+4 b})^{r}}\left(\sum_{j=0}^{\frac{r}{2}-1}(-1)^j\binom{r}{j}\sum_{n=0}^\infty\sum_{k=0}^n\binom{n}{k}(a j)^{n-k}(r-2 j)^k v_k\frac{t^n}{n!}\right.\\
&\quad\left. +(-1)^{\frac{r}{2}}\binom{r}{\frac{r}{2}}\sum_{n=0}^\infty\left(\frac{a r}{2}\right)^n\frac{t^n}{n!}\right)\,.
\end{align*}
Therefore, we get
\begin{multline*}
\sum_{k_1+\cdots+k_r=n\atop k_1,\dots,k_r\ge 1}\binom{n}{k_1,\dots,k_r}u_{k_1}\cdots u_{k_r}\\
=\frac{1}{(\sqrt{a^2+4 b})^{r}}\left(\sum_{j=0}^{\frac{r}{2}-1}(-1)^j\binom{r}{j}\sum_{k=0}^n\binom{n}{k}(a j)^{n-k}(r-2 j)^k v_k\right.\\
\quad \left.
+(-1)^{\frac{r}{2}}\binom{r}{\frac{r}{2}}\left(\frac{a r}{2}\right)^n\right)\,.
\end{multline*}
Similarly,  we get
\begin{multline*}
\sum_{k_1+\cdots+k_r=n\atop k_1,\dots,k_r\ge 0}\binom{n}{k_1,\dots,k_r}v_{k_1}\cdots v_{k_r}\\
=\sum_{j=0}^{\frac{r}{2}-1}\binom{r}{j}\sum_{k=0}^n\binom{n}{k}(a j)^{n-k}(r-2 j)^k v_k
+\binom{r}{\frac{r}{2}}\left(\frac{a r}{2}\right)^n\,.
\end{multline*}
\end{proof}

\section{Examples}

When $a=6$ and $b=-1$, then $B_n=u_n$ are balancing numbers and $C_n=v_n/2$ are Lucas-balancing numbers.  Thus, Theorem \ref{th-uv} can be reduced as follows.

\begin{Cor}
If $r$ is odd, then
\begin{multline*}
\sum_{k_1+\cdots+k_r=n\atop k_1,\dots,k_r\ge 1}\binom{n}{k_1,\dots,k_r}B_{k_1}\cdots B_{k_r}\\
=\frac{1}{(4\sqrt{2})^{r-1}}\sum_{k=0}^n\binom{n}{k}\sum_{j=0}^{\frac{r-1}{2}}(-1)^j\binom{r}{j}(6 j)^{n-k}(r-2 j)^k B_k
\end{multline*}
and
\begin{multline*}
\sum_{k_1+\cdots+k_r=n\atop k_1,\dots,k_r\ge 0}\binom{n}{k_1,\dots,k_r}C_{k_1}\cdots C_{k_r}\\
=\frac{1}{2^{r-1}}\sum_{k=0}^n\binom{n}{k}\sum_{j=0}^{\frac{r-1}{2}}\binom{r}{j}(6 j)^{n-k}(r-2 j)^k C_k\,.
\end{multline*}
If $r$ is even, then
\begin{multline*}
\sum_{k_1+\cdots+k_r=n\atop k_1,\dots,k_r\ge 1}\binom{n}{k_1,\dots,k_r}B_{k_1}\cdots B_{k_r}\\
=\frac{1}{(4\sqrt{2})^{r}}\left(2\sum_{k=0}^n\binom{n}{k}\sum_{j=0}^{\frac{r}{2}-1}(-1)^j\binom{r}{j}(6 j)^{n-k}(r-2 j)^k C_k\right.\\
\quad \left.
+(-1)^{\frac{r}{2}}\binom{r}{\frac{r}{2}}\left(\frac{r}{2}\right)^n\right)
\end{multline*}
and
\begin{multline*}
\sum_{k_1+\cdots+k_r=n\atop k_1,\dots,k_r\ge 0}\binom{n}{k_1,\dots,k_r}C_{k_1}\cdots C_{k_r}\\
=\frac{1}{2^r}\left(2\sum_{k=0}^n\binom{n}{k}\sum_{j=0}^{\frac{r}{2}-1}\binom{r}{j}(6 j)^{n-k}(r-2 j)^k C_k
+\binom{r}{\frac{r}{2}}\left(\frac{6 r}{2}\right)^n\right)\,.
\end{multline*}
\end{Cor}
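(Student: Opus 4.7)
The plan is to deduce the corollary directly from Theorem~\ref{th-uv} by specializing the parameters. The characteristic equation $x^2-6x+1=0$ for the balancing recurrence has roots $\alpha=3+2\sqrt{2}$ and $\beta=3-2\sqrt{2}$, so with $a=6$ and $b=-1$ we get $\sqrt{a^2+4b}=\sqrt{32}=4\sqrt{2}$. The sequences appearing in Theorem~\ref{th-uv} then become $u_n=(\alpha^n-\beta^n)/(4\sqrt{2})=B_n$ and $v_n=\alpha^n+\beta^n=2C_n$; the first identification is immediate from $B_0=0$, $B_1=1$ together with $B_n=6B_{n-1}-B_{n-2}$, and the second comes directly from the definition of $C_n$ via the generating function $c(t)$ in Section~5.

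First I would substitute $u_k=B_k$, $\sqrt{a^2+4b}=4\sqrt{2}$, and $(aj)^{n-k}=(6j)^{n-k}$ into (\ref{u:odd}) for odd $r$ and into (\ref{u:even}) for even $r$. The normalizing constants $(\sqrt{a^2+4b})^{r-1}$ and $(\sqrt{a^2+4b})^r$ become $(4\sqrt{2})^{r-1}$ and $(4\sqrt{2})^r$, and the isolated term $(ar/2)^n$ in the even case becomes $(3r)^n$. This immediately yields the two identities for $\sum \binom{n}{k_1,\dots,k_r}B_{k_1}\cdots B_{k_r}$.

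Second, for the identities involving $C_k$, I would substitute $v_k=2C_k$ into (\ref{v:odd}) and (\ref{v:even}). On the left-hand side the product $v_{k_1}\cdots v_{k_r}$ contributes a factor $2^r$, while on the right-hand side each appearance of $v_k$ contributes only a factor of $2$. Dividing both sides by $2^r$ therefore produces the prefactor $1/2^{r-1}$ in the odd case and the prefactor $1/2^r$ in the even case, with the inner sum carrying an extra factor of $2$. In the even case the isolated summand $\binom{r}{r/2}(ar/2)^n=\binom{r}{r/2}(3r)^n$ does not arise from any $v_k$, so it keeps its original normalization and lands inside the $1/2^r$ prefactor without any extra factor of $2$.

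The whole argument is entirely mechanical once Theorem~\ref{th-uv} is in hand: no step presents a genuine obstacle. The only bookkeeping point deserving attention is the asymmetry introduced by the scaling $v_n=2C_n$, which affects the left-hand side multiplicatively as $2^r$ but the right-hand side only linearly as $2$ per occurrence of $v_k$; keeping track of this correctly is what produces the different prefactors $1/2^{r-1}$ and $1/2^r$ in the odd and even cases. A brief sanity check against Section~5 (where the cases $r=2$ and $r=3$ were worked out by hand) can be made afterwards to confirm the substitutions are consistent.
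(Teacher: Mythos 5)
Your proof is correct and is precisely the paper's own (one-line) argument: the corollary follows by substituting $a=6$, $b=-1$ into Theorem~\ref{th-uv}, using $\sqrt{a^2+4b}=4\sqrt{2}$, $u_n=B_n$ and $v_n=2C_n$, with the factor $2^r$ on the left and the single factor $2$ on the right accounting for the prefactors $1/2^{r-1}$ and $1/2^r$. One remark: your (correct) computation gives $(ar/2)^n=(3r)^n$ for the isolated term in the even-$r$ balancing identity, whereas the corollary as printed has $\left(\frac{r}{2}\right)^n$ there --- this is a typo in the paper (the analogous term in the fourth identity is correctly written $\left(\frac{6r}{2}\right)^n$, and the direct derivation in Section~5 also gives $(3r)^n$), so you should not claim your substitution ``immediately yields'' the printed formula without flagging the discrepancy.
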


When $a=b=1$, then $F_n=u_n$ are Fibonacci numbers and $L_n=v_n$ are Lucas numbers. Thus, Theorem \ref{th-uv} can be reduced as follows.

\begin{Cor}
If $r$ is odd, then
\begin{multline*}
\sum_{k_1+\cdots+k_r=n\atop k_1,\dots,k_r\ge 1}\binom{n}{k_1,\dots,k_r}F_{k_1}\cdots F_{k_r}\\
=\frac{1}{(\sqrt{5})^{r-1}}\sum_{k=0}^n\binom{n}{k}\sum_{j=0}^{\frac{r-1}{2}}(-1)^j\binom{r}{j}j^{n-k}(r-2 j)^k F_k
\end{multline*}
and
\begin{multline*}
\sum_{k_1+\cdots+k_r=n\atop k_1,\dots,k_r\ge 0}\binom{n}{k_1,\dots,k_r}L_{k_1}\cdots L_{k_r}\\
=\sum_{k=0}^n\binom{n}{k}\sum_{j=0}^{\frac{r-1}{2}}\binom{r}{j}j^{n-k}(r-2 j)^k L_k\,.
\end{multline*}
If $r$ is even, then
\begin{multline*}
\sum_{k_1+\cdots+k_r=n\atop k_1,\dots,k_r\ge 1}\binom{n}{k_1,\dots,k_r}F_{k_1}\cdots F_{k_r}\\
=\frac{1}{(\sqrt{5})^{r}}\left(\sum_{k=0}^n\binom{n}{k}\sum_{j=0}^{\frac{r}{2}-1}(-1)^j\binom{r}{j}j^{n-k}(r-2 j)^k L_k\right.\\
\quad \left.
+(-1)^{\frac{r}{2}}\binom{r}{\frac{r}{2}}\left(\frac{r}{2}\right)^n\right)
\end{multline*}
and
$$
\sum_{k_1+\cdots+k_r=n\atop k_1,\dots,k_r\ge 0}\binom{n}{k_1,\dots,k_r}L_{k_1}\cdots L_{k_r}
=\sum_{k=0}^n\binom{n}{k}\sum_{j=0}^{\frac{r}{2}-1}\binom{r}{j}j^{n-k}(r-2 j)^k L_k
+\binom{r}{\frac{r}{2}}\left(\frac{r}{2}\right)^n\,.
$$
\end{Cor}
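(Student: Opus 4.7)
The plan is to obtain this corollary as a direct specialization of Theorem \ref{th-uv}, with no new generating-function work needed. With $a=b=1$, the characteristic roots become $\alpha=(1+\sqrt5)/2$ and $\beta=(1-\sqrt5)/2$, so $\sqrt{a^2+4b}=\sqrt5$, and the general-term formulas $u_n=(\alpha^n-\beta^n)/(\alpha-\beta)$ and $v_n=\alpha^n+\beta^n$ are precisely the Binet formulas for the Fibonacci numbers $F_n$ and Lucas numbers $L_n$. Hence $u_n=F_n$ and $v_n=L_n$, and the hypotheses of Theorem \ref{th-uv} are met.

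The substitutions needed in the four identities \eqref{u:odd}--\eqref{v:even} are mechanical: replace every $(\sqrt{a^2+4b})$ by $\sqrt5$, every factor $(aj)^{n-k}$ by $j^{n-k}$ (since $a=1$), and every $(ar/2)^n$ by $(r/2)^n$. The terms $(r-2j)^k$ carry through unchanged. I would carry out the odd-$r$ case first: substitute into \eqref{u:odd} to get the first Fibonacci identity, and into \eqref{v:odd} to get the first Lucas identity (noting that for the Lucas case the prefactor $(\sqrt{a^2+4b})^0=1$, which is why no $\sqrt5$ survives). Then I would do the even-$r$ case the same way, using \eqref{u:even} and \eqref{v:even}, taking care to keep the $(-1)^{r/2}\binom{r}{r/2}(r/2)^n$ correction term.

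There is no real obstacle here; the only thing to double-check is bookkeeping of the constants in the even-$r$ Lucas case, where the parent identity \eqref{v:even} has a leading factor of $1$ (not $1/(\sqrt{a^2+4b})^r$) and the correction term $\binom{r}{r/2}(ar/2)^n$ specializes to $\binom{r}{r/2}(r/2)^n$. Once these constants are tracked, the four stated identities follow immediately, so the proof of the corollary reduces to a single sentence invoking Theorem \ref{th-uv} with $a=b=1$.
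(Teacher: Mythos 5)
Your proposal is correct and is exactly the paper's approach: the paper introduces this corollary with the single sentence that Theorem \ref{th-uv} ``can be reduced'' by setting $a=b=1$, which is precisely the specialization you carry out (Binet formulas give $u_n=F_n$, $v_n=L_n$, and $\sqrt{a^2+4b}=\sqrt5$). Your bookkeeping of the constants, including the $(-1)^{r/2}\binom{r}{r/2}(r/2)^n$ correction term in the even case, matches the stated identities.
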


When $r=2$ and $a=b=1$, we have
$$
\sum_{n=0}^n\binom{n}{k}F_k F_{n-k}=\frac{2^n L_n-2}{5}
$$
and
$$
\sum_{n=0}^n\binom{n}{k}L_k L_{n-k}=2^n L_n+2\,.
$$



\begin{thebibliography}{99}

\bibitem{AD1}
T. Agoh and K. Dilcher, {\em
Convolution identities and lacunary recurrences for Bernoulli numbers},
J. Number Theory 124 (2007), 105--122.

\bibitem{AD2}
T. Agoh and K. Dilcher, {\em
Higher-order recurrences for Bernoulli numbers},
J. Number Theory {\bf 129} (2009), 1837--1847.

\bibitem{AD3}
T. Agoh and K. Dilcher, {\em
Higher-order convolutions for Bernoulli and Euler polynomials},
J. Math. Anal. Appl. {\bf 419} (2014), 1235--1247.

\bibitem{behera-panda}
A. Behera and G. K. Panda, {\em On the square roots of triangular numbers},
Fibonacci Quart. 37 (1999) 98--105.

\bibitem{Finkelstein}
R. Finkelstein, {\em The house problem},
Amer. Math. Monthly {\bf 72} (1965), 1082--1088.

\bibitem{Komatsu2015}
T. Komatsu,  {\em
Higher-order convolution identities for Cauchy numbers of the second kind},
Proc. Jangjeon Math. Soc. {\bf 18} (2015), 369--383.

\bibitem{Komatsu2016}
T. Komatsu,  {\em Higher-order convolution identities for Cauchy numbers},
Tokyo J. Math. {\bf 39} (2016). 15 pages.

\bibitem{KMP}
T. Komatsu, Z. Masakova and E. Pelantova, {\em
Higher-order identities for Fibonacci numbers},
Fibonacci Quart. {\bf 52}, no.5 (2014), 150--163.

\bibitem{KS2014}
T. Komatsu and L. Szalay, {\em Balancing with binomial coefficients},
Intern. J. Number Theory {\bf 10} (2014), 1729--1742.

\bibitem{KS2016}
T. Komatsu and Y. Simsek,  {\em
Third and higher order convolution identities for Cauchy numbers},
Filomat {\bf 30} (2016), 1053--1060.

\bibitem{Liptai2004}
K. Liptai,  {\em Fibonacci Balancing numbers},
Fibonacci Quart. {\bf 42} (2004), 330--340.

\bibitem{LLPS}
K. Liptai, F. Luca, \'A. Pint\'er and L. Szalay, {\em Generalized balancing numbers},
Indag. Math. (N.S.) {\bf 20} (2009), 87--100.

\bibitem{panda2009}
G. K. Panda,  {\em Some fascinating properties of balancing numbers},  In Proc. of Eleventh Internat. Conference on Fibonacci Numbers and Their Applications, Cong. Numerantium {\bf 194} (2009), 185--189.

\bibitem{PR}
B. K. Patel and P. K. Ray,
{\em The period, rank and order of the sequence of balancing numbers modulo $m$}, Math. Rep. (Bucur.) \textbf{18}, No.3 (2016), Article No.9.

\bibitem{ray2012}
P. K. Ray, {\em Some congruences for balancing and Lucas-balancing numbers and their applications},
Integers, {\bf 14} (2014), \#A8.
\end{thebibliography}
\end{document}